\documentclass[11pt]{article}
\usepackage{amscd}
\usepackage{amsfonts}
\usepackage{amsmath}
\usepackage{amssymb}
\usepackage{amsthm}
\usepackage{bbm}
\usepackage{CJK}
\usepackage{fancyhdr}
\usepackage{graphicx}
\usepackage{indentfirst}
\usepackage{latexsym}
\usepackage{mathrsfs}
\usepackage{xypic}
\usepackage[colorlinks,linkcolor=red, anchorcolor=blue]{hyperref}

\newtheorem{theorem}{Theorem}[section]
\newtheorem{lemma}[theorem]{Lemma}
\newtheorem{definition}[theorem]{Definition}
\newtheorem{proposition}[theorem]{Proposition}
\newtheorem{example}[theorem]{Example}

\newtheorem{corollary}[theorem]{Corollary}
\newtheorem{remark}[theorem]{Remark}

\usepackage[top=1in,bottom=1in,left=1.25in,right=1.25in]{geometry}
\textwidth 15cm \textheight 22cm \oddsidemargin 0in
\def\<{\langle}
\def\>{\rangle}
\def\a{\alpha}
\def\b{\beta}

\def\g{\gamma}

\def\lr{\longrightarrow}

\def\o{\otimes}

\def\r{\rho}

\def\th{\theta}

\def\la{\lambda}

\date{}
\begin{document}
\renewcommand{\baselinestretch}{1.2}
\renewcommand{\arraystretch}{1.0}
\title{\bf The braided monoidal structure on the category of Hom-type Doi-Hopf modules}
\author{{\bf Daowei Lu\footnote {Corresponding author},
\ Shuanhong Wang}
{\small }}
 \maketitle

\begin{center}
\begin{minipage}{12.cm}

\noindent{\bf Abstract.} Let $(H,\a_H)$ be a Hom-Hopf algebra, $(A,\a_A)$ a right $H$-comodule algebra and $(C,\a_C)$ a left $H$-module coalgebra. Then we have the category $_A\mathcal{M}(H)^C$ of Hom-type Doi-Hopf modules. The aim of this paper is to make the category $_A\mathcal{M}(H)^C$ into a braided monoidal category. Our construction unifies quasitriangular and coquasitriangular Hom-Hopf algebras and Hom-Yetter-Drinfeld modules. We study tensor identities for monoidal categories of Hom-type Doi-Hopf modules.  Finally we show that the category $_A\mathcal{M}(H)^C$ is isomorphic to $A\#C^*$-module category.
 \\

\noindent{\bf Keywords:} Hom-Hopf algebra; Drinfeld double; Braided monoidal category; Doi-Hopf module.
\\

 \noindent{\bf  Mathematics Subject Classification:} 16W30, 16T05.
 \end{minipage}
 \end{center}
 \normalsize\vskip1cm

\section*{Introduction}

The Doi-Hopf datum $(H,A,C)$ introduced in \cite{D} consists of a Hopf algebra $H$, a right $H$-comodule algebra $A$ and a left $H$-module coalgebra $C$. The Doi-Hopf module $M$ over $(H,A,C)$ is both a left $A$-module and a right $C$-comodule satisfying certain compatible condition. The category of Doi-Hopf modules over $(H,A,C)$ is denoted by $_A\mathcal{M}(H)^C$.  The research of $_A\mathcal{M}(H)^C$ turn out to be very essential: it is pointed out in \cite{D} that categories such as module and comodule over bialgebra, in \cite{S} that the Hopf modules category, and in \cite{CMZ, RT} that the Yetter-Drinfeld modules category are special cases of $_A\mathcal{M}(H)^C$. For a further study of Doi-Hopf modules, we refer to \cite{CVZ}.

Braided monoidal categories give rise to solutions to the Quantum Yang-Baxter equations. The classical braided monoidal categories come from the representations of quasitriangular Hopf algebras which have been very widely studied.
One of the most important examples of quasitriangular Hopf algebras is the Drinfeld double of any finite dimensional Hopf algebra. As such, they are interesting to different research communities in mathematical physics (see \cite{M}  for example).

 From a physical viewpoint, Hom-type structures are important because they are related
to vertex operator algebras and string theory(see \cite{AGS},\cite{AS}). Hom-Lie algebras were introduced in \cite{HLS} to describe the structures on some $q$-deformations of the Witt and the Virasoro algebras, both of which are important in vertex operator algebras and string theory. Also in this paper Hom-type algebras have been introduced in the form of Hom-Lie algebras, where the Jacobi identity was twisted along a linear endomorphism. Meanwhile, Hom-associative algebras have been suggested in \cite{MS1} to give rise to a Hom-Lie algebra using the commutator bracket. Other Hom-type structures such as Hom-coalgebras, Hom-bialgebras, Hom-Hopf algebras as well as their properties have been considered in \cite{MS2}.

Yang-Baxter equations and its solutions have been generalized to Hom-bialgebra in \cite{Y}, and the author pointed out that a quasitriangular Hom-bialgebra $(H,R)$ provides solutions of a class of Hom Yang-Baxter equations:
\begin{eqnarray*}
&&(R_{12}R_{13})R_{23}=R_{23}(R_{13}R_{12}),\\
&&R_{12}(R_{13}R_{23})=(R_{23}R_{13})R_{12}.
\end{eqnarray*}

Motivated by these ideas, in this paper our aim is to make the Hom-type Doi-Hopf module category $_A\mathcal{M}(H)^C$ into a braided monoidal category. We will prove that $_A\mathcal{M}(H)^C$ and $_{A\#C^*}\mathcal{M}$ are isomorphic as braided monoidal category. Hence $A\#C^*$, as a generalization of the Drinfeld double $D(H)$, is quasitriangular.

This article is organized as follows:

In section 1, we will recall the basic definitions and results on Hom-Hopf algebra, including Hom-Hopf algebra, Hom-module coalgebra, Hom-comoudle algebra and quasitriangular Hom-bialgebra.

In section 2, we will discuss when the category $_A\mathcal{M}(H)^C$ becomes monoidal (see Proposition 2.3). Then Yetter-Drinfeld category $_H\mathcal{YD}^H$ is isomorphic to $_H\mathcal{M}(H^{op}\o H)^H$ as monoidal category. Maps between the underlying Hom-Hopf algebras, Hom-algebras and Hom-coalgebras give rise to functors between Hom-type Doi-Hopf modules (see Proposition 2.6 and 2,7).
We will also discuss particular situations which give rise to the tensor identities (see Theorem 2.8 and 2.10).

In section 3, we will present the necessary and sufficient conditions for $_A\mathcal{M}(H)^C$ to be braided. We point out that this comes down to a convolution invertible map $Q:C\o C\rightarrow A\o A$ subject to certain compatible relations (see Theorem 3.8). Especially, on one hand when $C=k$ this conditions turn out to be equivalent to $(A,Q(1))$ being quasitriangular. On the other hand when $A=k$ then we recover the definition of a coquasitriangular Hom-Hopf algebra.

In section 4, we will prove the monoidal category isomorphism $_A\mathcal{M}(H)^C\simeq\ _{A\#C^*}\mathcal{M}$ (see Proposition 4.4). Since $_A\mathcal{M}(H)^C$ is braided, so is $_{A\#C^*}\mathcal{M}$. In the case of $_H\mathcal{M}(H^{op}\o H)^H$, $A\#C^*$ reduces to the Drinfeld double $D(H)=H\bowtie H^*$.

Throughout this article, all the vector spaces, tensor product and homomorphisms are over a fixed field $k$.  For a coalgebra $C$, we will use the Heyneman-Sweedler's notation $\Delta(c)=c_{1}\otimes c_{2},$
for all $c\in C$ (summation omitted).

\section{Preliminary}

\def\theequation{1.\arabic{equation}}
\setcounter{equation} {0} \hskip\parindent

In this section, we will recall from \cite{MP,MS2} the basic definitions and results on the  Hom-Hopf algebras,  Hom-modules and  Hom-comodules.

 A unital Hom-associative algebra is a triple $(A,m,\alpha)$ where $\alpha:A\lr A$ and $m:A\o A\lr A$ are linear maps, with notation $m(a\o b)=ab$ such that for any $a,b,c\in A$,
 \begin{eqnarray*}
&&\alpha(ab)=\alpha(a)\alpha(b),\ \alpha(1_{A})=1_{A},\\
&&1_{A}a=\alpha(a)=a1_{A},\ \alpha(a)(bc)=(ab)\alpha(c).
\end{eqnarray*}
A linear map $f:(A,\mu_{A},\alpha_{A})\lr (B,\mu_{B},\alpha_{B})$ is called a morphism of Hom-associative algebra if $\alpha_{B}\circ f=f\circ\alpha_{A}$, $f(1_{A})=1_{B}$ and $f\circ\mu_{A}=\mu_{B}\circ(f\o f).$

 A counital Hom-coassociative coalgebra is a triple $(C,\Delta,\varepsilon,\alpha)$ where $\alpha:C\lr C$, $\varepsilon:C\lr k$, and $\Delta:C\lr C\o C$ are linear maps such that
 \begin{eqnarray*}
&&\varepsilon\circ\alpha=\varepsilon,\ (\alpha\o\alpha)\circ\Delta=\Delta\circ\alpha,\\
&&(\varepsilon\o id)\circ\Delta=\alpha=(id\o\varepsilon)\circ\Delta,\\
&&(\Delta\o\alpha)\circ\Delta=(\alpha\o\Delta)\circ\Delta.
\end{eqnarray*}

A linear map $f:(C,\Delta_{C},\alpha_{C})\lr (D,\Delta_{D},\alpha_{D})$ is called a morphism of Hom-coassociative coalgebra if $\alpha_{D}\circ f=f\circ\alpha_{C}$, $\varepsilon_{D}\circ f=\varepsilon_{C}$ and $\Delta_{D}\circ f=(f\o f)\circ\Delta_{C}.$

In what follows, we will always assume all Hom-algebras are unital and Hom-coalgebras are counital.

A Hom-bialgebra is a quadruple $(H,\mu,\Delta,\alpha)$, where $(H,\mu,\alpha)$ is a Hom-associative algebra and $(H,\Delta,\alpha)$ is a Hom-coassociative coalgebra such that $\Delta$ and $\varepsilon$ are morphisms of Hom-associative algebra.

A Hom-Hopf algebra $(H,\mu,\Delta,\alpha)$ is a Hom-bialgebra $H$ with a linear map $S:H\lr H$(called antipode) such that
\begin{eqnarray*}
&&S\circ\alpha=\alpha\circ S,\\
&&S(h_{1})h_{2}=  h_{1}S(h_{2})=\varepsilon(h)1,
\end{eqnarray*}
for any $h\in H$. For $S$ we have the following properties:
 \begin{eqnarray*}
&&S(h)_{1}\o S(h)_{2}=  S(h_{2})\o S(h_{1}),\\
&&S(gh)=S(h)S(g),\ \varepsilon\circ S=\varepsilon.
 \end{eqnarray*}
For any Hopf algebra $H$ and any Hopf algebra endomorphism $\alpha$ of $H$, there exists a Hom-Hopf algebra $H_{\alpha}=(H,\alpha\circ\mu,1_{H},\Delta\circ\alpha,\varepsilon,S,\alpha)$.

Let $(A,\alpha_{A})$ be a Hom-associative algebra, $M$ a linear space and $\alpha_{M}:M\lr M$ a linear map. A left $A$-module structure on $(M,\alpha_{M})$ consists of a linear map $A\o M\lr M$, $a\o m\mapsto a\cdot m$, such that
 \begin{eqnarray*}
&&1_{A}\cdot m=\alpha_{M}(m),\\
&&\alpha_{M}(a\cdot m)=\alpha_{A}(a)\cdot\alpha_{M}(m),\\
&&\alpha_{A}(a)\cdot(b\cdot m)=(ab)\cdot\alpha_{M}(m),
\end{eqnarray*}
for any $a,b\in A$ and $m\in M.$

Similarly we can define the right $A$-modules. Let $(M,\mu)$ and $(M',\mu')$ be two left $A$-modules, then a linear map $f:M\rightarrow N$ is a called left $A$-module map if $f(a\cdot m)=aa\cdot f(m)$ for any $a\in A$, $m\in M$ and $f\circ\mu=\mu'\circ f$.

Suppose that $(M,\mu)$ is a right $A$-module and $(N,\nu)$ is a left $A$-module, then denote by $X$ the subspace of $M\o N$ spanned by the elements of the type $\Big\{\sum m_i\cdot a\o \nu(n_i)-\sum \mu(m_i)\o a\cdot n_i|\forall m_i\in M,n_i\in N,a\in A\Big\}$. Then we have the quotient space
$$M\o_AN=(M\o N)/X.$$

Let $(C,\alpha_{C})$ be a Hom-coassociative coalgebra, $M$ a linear space and $\alpha_{M}:M\lr M$ a linear map. A right $C$-comodule structure on $(M,\alpha_{M})$ consists a linear map $\rho:M\lr M\o C$ such that
 \begin{eqnarray*}
&&(id\o\varepsilon_{C})\circ\rho=\alpha_{M},\\
&&(\alpha_{M}\o\alpha_{C})\circ\rho=\rho\circ\alpha_{M},\\
&&(\rho\o\alpha_{C})\circ\rho=(\alpha_{M}\o\Delta)\circ\rho.
 \end{eqnarray*}
Let $(M,\mu)$ and $(M',\mu')$ be two right $(C,\gamma)$-comodules, then a linear map $g:M\lr M'$ is a called right $C$-comodule map if $g\circ \mu=\mu'\circ g$ and $\rho_{M'}\circ g=(g\otimes id)\circ\rho_{M}$.

Suppose that $(M,\mu)$ is a right $C$-comodule and $(N,\nu)$ is a left $C$-comodule, then we have the cotensor product
$$M\square_C N=\Big\{\sum m_i\o n_i\in M\o N|\sum\mu(m_i)\o n_{i(-1)}\o n_{i(0)}=\sum m_{i(0)}\o m_{i(1)}\o\nu(n_i)\Big\}.$$

Let $(H,\a_H)$ be a Hom-bialgebra. A Hom-algebra $(A,\a_A)$ is called a right $H$-comodule algebra if $A$ is a right $H$-comodule via $\r$, and
$$\r(ab)=\r(a)\r(b),~~~\r(1_A)=1_A\o1_H,$$
for all $a,b\in A$. A Hom-coalgebra $(C,\a_C)$ is called a left $H$-module coalgebra if $C$ is a left $H$-module and
$$\Delta_C(h\cdot c)=\Delta_H(h)\cdot\Delta_C(c),~~~\varepsilon_C(h\cdot c)=\varepsilon_H(h)\varepsilon_C(c),$$
for all $h\in H,c\in C$.

Suppose that $(A,\a_A)$ is a right $H$-comodule algebra and $(C,\a_C)$ is a left $H$-module coalgebra. An object $(M,\a_M)$ is called a Doi-Hopf module over $(H,A,C)$ if $M$ is a left $A$-module via $\cdot$ and a right $C$-comodule via $\r'$ satisfying
$$\r(a\cdot m)=a_{(0)}\cdot m_{(0)}\o a_{(1)}\cdot m_{(1)},$$
for all $a\in A$ and $m\in M$.

The category of Doi-Hopf modules over $(H,A,C)$ will be denoted by $_A\mathcal{M}(H)^C$.

Recall from \cite{K} that a monoidal category $(\mathcal{C},\o,I,a,l,r)$ is braided if there exists a family of natural isomorphisms $t=\big\{t_{_{V,W}}:V\o W\rightarrow W\o V\big\}_{V,W\in\mathcal{C}}$ such that
\begin{eqnarray}
&&a_{_{V,W,U}}t_{_{U,V\o W}}a_{_{U,V,W}}=(id_V\o t_{_{U,W}})a_{_{V,U,W}}(t_{_{U,V}}\o id_W),\\
&&a^{-1}_{_{W,U,V}}t_{_{U\o V,W}}a^{-1}_{_{U,V,W}}=(t_{_{U,W}}\o id_V)a^{-1}_{_{U,W,V}}(id_U\o t_{_{V,W}}),
\end{eqnarray}
for all objects $U,V,W$ in $\mathcal{C}$.

Hom-bialgebra $(H,\a_H)$ is quasitriangular \cite{Y} if there exists an element $R=R^1\o R^2\in H\o H$ such that
\begin{eqnarray*}
&&\Delta^{op}(h)R=R\Delta(h),\\
&&(\Delta\o\a_H)R=R^{13}R^{23},\\
&&(\a_H\o\Delta)R=R^{13}R^{12},
\end{eqnarray*}
where $R^{13}=R^1\o1\o R^2$, $R^{12}=R^1\o R^2\o1$ and $R^{23}=1\o R^1\o R^2$.

\section{Monoidal structure on the category $_A\mathcal{M}(H)^C$}
\def\theequation{2.\arabic{equation}}
\setcounter{equation} {0} \hskip\parindent

In this section, we will discuss when the category $_A\mathcal{M}(H)^C$ becomes monoidal.
In what follows, let $(H,\a_H)$ be a Hom-Hopf algebra, $(A,\a_A)$ a right $H$-comodule algebra and $(C,\a_C)$ a left $H$-module coalgebra. First of all, we have the following definition.

\begin{definition}
The Doi-Hopf datum $(H,A,C)$ called is a monoidal Doi-Hopf datum if $(A,\a_A)$ and $(C,\a_C)$ are Hom-bialgebras with the following compatible conditions:
\begin{eqnarray}
&&a_{(0)1}\o a_{(0)2}\o\a^2_H(a_{(1)})\cdot(cd)=a_{1(0)}\o a_{2(0)}\o(a_{1(1)}\cdot c)(a_{2(1)}\cdot d),\label{A}\\
&&\varepsilon(a)1_C=\varepsilon(a_{(0)})a_{(1)}\cdot 1_C.
\end{eqnarray}
\end{definition}

\begin{remark}
If $(H,A,C)$ is a monoidal Doi-Hopf datum, then $A$ and $C$ are also Doi-Hopf modules. The left $A$-action and the right $C$-coaction on $C$ are given by
\begin{eqnarray*}
&&a\cdot c=\varepsilon(a_{(0)})a_{(1)}\cdot c,~~~\r(c)=c_1\o\a^{-1}_C(c_2).
\end{eqnarray*}
The left $A$-action and the right $C$-coaction on $A$ are given by
\begin{eqnarray*}
&&a\cdot b=\a^{-1}_A(a)b,~~~\r(a)=a_{(0)}\o a_{(1)}\cdot 1_C.
\end{eqnarray*}
\end{remark}

\begin{proposition}
Let $(H,A,C)$ be a monoidal Doi-Hopf datum. For all Doi-Hopf modules $(M,\a_M)$ and $(N,\a_N)$, $(M\o N,\a_M\o\a_N)$ still makes a Doi-Hopf module with the following structures:
\begin{eqnarray}
&&a\cdot(m\o n)=a_1\cdot m\o a_2\cdot n,\\
&&\r(m\o n)=m_{(0)}\o n_{(0)}\o\a^{-2}_C(m_{(1)}n_{(1)}),
\end{eqnarray}
for all $a\in A,m\in M$ and $n\in N$. The category $\mathcal{C}=\ _A\mathcal{M}(H)^C$ is a monoidal category.
\end{proposition}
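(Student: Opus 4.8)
The plan is to verify the claim in three stages: that $(M\o N,\a_M\o\a_N)$ is simultaneously a left $A$-module and a right $C$-comodule satisfying the Doi--Hopf compatibility, and then that the coherence data make $\mathcal{C}$ monoidal. Throughout I will use that $(A,\a_A)$ and $(C,\a_C)$ are Hom-bialgebras, so $\Delta_A,\varepsilon_A,\Delta_C,\varepsilon_C$ are Hom-algebra morphisms and $\a_A,\a_C$ are multiplicative, and that $\a_A,\a_C,\a_H$ are bijective (implicit in the appearance of $\a_C^{-2}$). The left module axioms for $a\cdot(m\o n)=a_1\cdot m\o a_2\cdot n$ are routine: the unit axiom follows from $\Delta_A(1_A)=1_A\o1_A$, the $\a$-compatibility from $(\a_A\o\a_A)\Delta_A=\Delta_A\a_A$ and the module axioms of $M,N$, and the associativity axiom $\a_A(a)\cdot(b\cdot-)=(ab)\cdot\a_{M\o N}(-)$ from multiplicativity of $\Delta_A$ together with the corresponding axioms on $M$ and $N$. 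For the coaction $\r(m\o n)=m_{(0)}\o n_{(0)}\o\a_C^{-2}(m_{(1)}n_{(1)})$, the counit axiom is immediate from $\varepsilon_C\a_C^{-2}=\varepsilon_C$ and $\varepsilon_C$ multiplicative, and the $\a$-compatibility from $\a_C$ multiplicative.

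The comodule coassociativity is the delicate part of this stage. I would expand both $(\r\o\a_C)\r$ and $(\a_{M\o N}\o\Delta_C)\r$, use $\Delta_C\a_C^{-2}=(\a_C^{-2}\o\a_C^{-2})\Delta_C$ together with $\Delta_C(xy)=\Delta_C(x)\Delta_C(y)$ to split the coproduct on $\a_C^{-2}(m_{(1)}n_{(1)})$, and then substitute the comodule coassociativity of $M$ and $N$ in the form $m_{(0)(0)}\o m_{(0)(1)}\o m_{(1)}=\a_M(m_{(0)})\o m_{(1)1}\o\a_C^{-1}(m_{(1)2})$. The surviving powers of $\a_C^{-1}$ then telescope, using multiplicativity of $\a_C$, to match the two expressions.

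Then comes the key step, the Doi--Hopf condition $\r(a\cdot(m\o n))=a_{(0)}\cdot(m\o n)_{(0)}\o a_{(1)}\cdot(m\o n)_{(1)}$. Applying the Doi--Hopf conditions of $M$ and $N$ to $a_1\cdot m$ and $a_2\cdot n$ reduces the left-hand side to $a_{1(0)}\cdot m_{(0)}\o a_{2(0)}\cdot n_{(0)}\o\a_C^{-2}\big((a_{1(1)}\cdot m_{(1)})(a_{2(1)}\cdot n_{(1)})\big)$. I would now invoke condition (\ref{A}) with $c=m_{(1)},d=n_{(1)}$, which rewrites this as $a_{(0)1}\cdot m_{(0)}\o a_{(0)2}\cdot n_{(0)}\o\a_C^{-2}\big(\a_H^2(a_{(1)})\cdot(m_{(1)}n_{(1)})\big)$; finally the $H$-module axiom $\a_C^{-2}(h\cdot c)=\a_H^{-2}(h)\cdot\a_C^{-2}(c)$ cancels the $\a_H^2$ against the $\a_C^{-2}$, leaving exactly $a_{(1)}\cdot\a_C^{-2}(m_{(1)}n_{(1)})$ in the last slot. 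This matching of the $\a_H^2$ twist in (\ref{A}) against the $\a_C^{-2}$ twist in $\r$ is precisely what condition (\ref{A}) was designed to produce, and I expect it to be the main obstacle of the whole argument.

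For the monoidal structure I would take $I=k$ with $a\cdot\lambda=\varepsilon_A(a)\lambda$ and $\r(\lambda)=\lambda\o1_C$, which is a Doi--Hopf module exactly by the second compatibility condition $\varepsilon(a)1_C=\varepsilon(a_{(0)})a_{(1)}\cdot1_C$ of Definition 2.1, and use the standard Hom-associator $a_{M,N,P}((m\o n)\o p)=\a_M(m)\o(n\o\a_P^{-1}(p))$ together with left and right unit constraints built from suitable powers of $\a$. The pentagon and triangle identities hold because the underlying linear associator is trivial and the $\a$-twists telescope; the remaining work is to check that the associator and unitors are morphisms in $_A\mathcal{M}(H)^C$, i.e.\ $A$-linear (from Hom-coassociativity $(\Delta_A\o\a_A)\Delta_A=(\a_A\o\Delta_A)\Delta_A$) and $C$-colinear (from multiplicativity of $\Delta_C$ and $\a_C$). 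These colinearity verifications, though longer, are of the same flavor as the coassociativity computation of the second stage and present no essential new difficulty once that is in hand.
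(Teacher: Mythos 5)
Your first three stages are correct and coincide with the paper's own argument: the paper likewise reduces everything to the Doi--Hopf compatibility and verifies it by exactly your computation, applying the compatibilities of $M$ and $N$ and then condition (\ref{A}) with $c=m_{(1)}$, $d=n_{(1)}$, the $\a_H^2$ in (\ref{A}) cancelling against the $\a_C^{-2}$ in the coaction. Your handling of the unit object (trivial action and coaction on $k$, justified by the second condition of Definition 2.1) also matches the paper.

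However, your associator is twisted in the wrong direction, and this is precisely where your deferred ``remaining work'' would fail. With the paper's conventions --- Hom-coassociativity $(\Delta_A\o\a_A)\Delta_A=(\a_A\o\Delta_A)\Delta_A$ and module axiom $\a_A(a)\cdot(b\cdot m)=(ab)\cdot\a_M(m)$ --- the map $\psi((m\o n)\o p)=\a_M(m)\o(n\o\a_P^{-1}(p))$ is \emph{not} $A$-linear: comparing $\psi\big(a\cdot((m\o n)\o p)\big)$ with $a\cdot\psi((m\o n)\o p)$, one needs
$$\a_A(a_{11})\o a_{12}\o\a_A^{-1}(a_{2})=a_1\o a_{21}\o a_{22},$$
whereas Hom-coassociativity gives $a_{11}\o a_{12}\o\a_A(a_2)=\a_A(a_1)\o a_{21}\o a_{22}$; so your condition is equivalent to $\a^2_A(a_1)\o a_{21}\o\a_A^{-2}(a_{22})=a_1\o a_{21}\o a_{22}$, which fails whenever $\a_A^2\neq id$. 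A parallel failure occurs for $C$-colinearity. The correct associator --- the one the paper imports from \cite{MP} --- is
$$a_{_{M,N,P}}((m\o n)\o p)=\a_M^{-1}(m)\o(n\o\a_P(p))$$
(the paper's $\a_N(p)$ is a typo for $\a_P(p)$), for which $A$-linearity is exactly Hom-coassociativity with $\a_A^{-1}$ applied to the first tensorand, namely $\a_A^{-1}(a_{11})\o a_{12}\o\a_A(a_2)=a_1\o a_{21}\o a_{22}$, and $C$-colinearity reduces, after expanding the twisted coaction $\r(m\o n)=m_{(0)}\o n_{(0)}\o\a^{-2}_C(m_{(1)}n_{(1)})$, to the Hom-associativity identity $\a_C(x)(yz)=(xy)\a_C(z)$ in $C$. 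Flip the powers and the rest of your proposal goes through.
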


\begin{proof}
We need only to verify the compatible condition. Indeed for $a\in A,m\in M$ and $n\in N$,
$$\begin{aligned}
\r(a\cdot(m\o n))&=a_{1(0)}\cdot m_{(0)}\o a_{2(0)}\cdot n_{(0)}\o\a^{-2}_C((a_{1(1)}\cdot m_{(1)})(a_{2(1)}\cdot n_{(1)}))\\
                 &\stackrel{(\ref{A})}=a_{(0)1}\cdot m_{(0)}\o a_{(0)2}\cdot n_{(0)}\o a_{(1)}\cdot \a^{-2}_C(m_{(1)}n_{(1)})\\
                 &=a_{(0)}\cdot(m_{(0)}\o n_{(0)})\o a_{(1)}\cdot \a^{-2}_C(m_{(1)}n_{(1)}).
\end{aligned}$$

It is now obvious that tensor product defines a functor $\mathcal{C}\times\mathcal{C}\rightarrow \mathcal{C}$. Under the trivial $A$-action and $C$-coaction, $k$ is a Doi-Hopf module, and $k$ is the unit object in $\mathcal{C}$. Let $M,N,P$ be Doi-Hopf modules, from \cite{MP} the isomorphisms
\begin{eqnarray*}
&&a_{_{M,N,P}}:(M\hat{\o}N)\hat{\o}P\rightarrow M\hat{\o}(N\hat{\o}P),~~~ (m\o n)\o p\mapsto \a^{-1}_M(m)\o(n\o\a_N(p)),\\
&&l_M:k\hat{\o}M\rightarrow M,~~~\la\o m\mapsto \la\a^{-1}_M(m),\\
&&r_M:M\hat{\o}k\rightarrow M,~~~m\o\la \mapsto \la\a^{-1}_M(m),
\end{eqnarray*}
define respectively the associator, left and right unit of $\mathcal{C}$.
\end{proof}

\begin{example}
 Suppose that $C$ is a left $H$-module bialgebra, which means that $C$ as a Hom-bialgebra is not only a left $H$-module algebra but also a left $H$-comodule algebra. Let $A$ be a Hom-bialgebra and a right $H$-comodule algebra such that for all $a\in A$,
$$a_{(0)1}\o a_{(0)2}\o a_{(1)1}\o a_{(1)2}=a_{1(0)}\o a_{2(0)}\o a_{1(1)}\o a_{2(1)}.$$
Clearly $(H,A,C)$ is a monoidal Doi-Hopf datum.

Actually $(H,\a_H)$ itself is a left $H$-module bialgebra, with the left adjoint action $h\cdot g=(h_1\a^{-1}_H(g))S\a_H(h_2)$ and the coaction afforded by the comultiplication.

Let $(H,\a_H)$ be a cocommutitive Hom-Hopf algebra, and $(C,\a_C)$ be a left $H$-module bialgebra. Easy to see that $(H,H,C)$ is a monoidal Doi-Hopf datum where $H$ is considered as a right $H$-comodule via comultiplication.
\end{example}

\begin{proposition}
Let $(H,\a_H)$ be a Hom-Hopf algebra with bijective antipode, then $H$ is a right $H^{op}\o H$-comodule with the following structure
$$\r(h)=\a^{-1}_H(h_{12})\o S^{-1}\a^{-2}_H(h_{11})\o\a^{-1}_H(h_2),$$
for all $h\in H$.
Define the left $H^{op}\o H$-action on $H$ by $(h\o g)\cdot l=(g\a^{-1}_H(l))\a_H(h)$, then $(H^{op}\o H,H,H)$ is a Doi-Hopf datum. Moreover we have the category isomorphism
$$_H\mathcal{M}(H^{op}\o H)^H\simeq\ _H\mathcal{YD}^H.$$
\end{proposition}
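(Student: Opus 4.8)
The plan is to establish the statement in three stages: verify that $\rho$ makes $(H,\alpha_H)$ a right $H^{op}\otimes H$-comodule algebra; verify that the stated action makes $(H,\alpha_H)$ a left $H^{op}\otimes H$-module coalgebra (these two facts together say $(H^{op}\otimes H, H, H)$ is a Doi-Hopf datum); and finally identify the resulting Doi-Hopf modules with Hom-Yetter-Drinfeld modules. All constructions are the Hom-analogues of the classical identification of $_H\mathcal{YD}^H$ with Doi-Hopf modules over $(H^{op}\otimes H, H, H)$, so the genuinely new feature is the careful bookkeeping of the twisting map $\alpha_H$ and its inverse (well defined since $\alpha_H$ is assumed bijective), together with the inverse antipode $S^{-1}$.

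For the first stage I would regard $H^{op}\otimes H$ as a Hom-coalgebra with $\alpha_{H^{op}\otimes H}=\alpha_H\otimes\alpha_H$ and $\Delta(x\otimes y)=(x_1\otimes y_1)\otimes(x_2\otimes y_2)$, and check the three right-comodule axioms. The counit axiom $(id\otimes\varepsilon)\circ\rho=\alpha_H$ follows from the Hom-counit relations and $\varepsilon\circ S=\varepsilon$; the $\alpha$-compatibility $(\alpha_H\otimes\alpha_{H^{op}\otimes H})\circ\rho=\rho\circ\alpha_H$ follows from $(\alpha_H\otimes\alpha_H)\circ\Delta=\Delta\circ\alpha_H$ and $S\alpha_H=\alpha_H S$; and the coassociativity axiom $(\rho\otimes\alpha_{H^{op}\otimes H})\circ\rho=(\alpha_H\otimes\Delta)\circ\rho$ is obtained by expanding both sides via the Hom-coassociativity $(\Delta\otimes\alpha_H)\circ\Delta=(\alpha_H\otimes\Delta)\circ\Delta$ and the anti-coalgebra identity $S(h)_1\otimes S(h)_2=S(h_2)\otimes S(h_1)$ (and its $S^{-1}$ version), tracking the powers of $\alpha_H^{-1}$. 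Multiplicativity $\rho(hl)=\rho(h)\rho(l)$ and $\rho(1_H)=1_H\otimes(1_H\otimes1_H)$ then follow from $\Delta$ being an algebra morphism and $S$ an anti-algebra morphism, recalling that the middle tensor factor multiplies in $H^{op}$.

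For the second stage I would verify that $(h\otimes g)\cdot l=(g\alpha^{-1}_H(l))\alpha_H(h)$ is a left Hom-module structure: the unit and $\alpha$-axioms use the Hom-unit relations $1_H a=\alpha_H(a)=a1_H$, while associativity $\alpha_{H^{op}\otimes H}(x)\cdot(y\cdot l)=(xy)\cdot\alpha_H(l)$ uses Hom-associativity $\alpha_H(a)(bc)=(ab)\alpha_H(c)$ with opposite multiplication in the first leg. That $H$ is moreover a module coalgebra, i.e. $\Delta((h\otimes g)\cdot l)=\Delta(h\otimes g)\cdot\Delta(l)$ and $\varepsilon((h\otimes g)\cdot l)=\varepsilon(h)\varepsilon(g)\varepsilon(l)$, is an immediate consequence of $\Delta$ and $\varepsilon$ being algebra morphisms together with $\varepsilon\circ\alpha_H=\varepsilon$. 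This completes the verification that $(H^{op}\otimes H,H,H)$ is a Doi-Hopf datum.

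The third stage is the main point and the step I expect to be most delicate. An object of $_H\mathcal{M}(H^{op}\otimes H)^H$ is a left $H$-module and right $H$-comodule $M$ with $\rho'(h\cdot m)=h_{(0)}\cdot m_{(0)}\otimes h_{(1)}\cdot m_{(1)}$, where $h_{(0)}\otimes h_{(1)}=\rho(h)$ and $h_{(1)}\cdot m_{(1)}$ is the module-coalgebra action; an object of $_H\mathcal{YD}^H$ carries exactly the same underlying module and comodule data. I would therefore take the functor to be the identity on underlying objects and on morphisms — a map is simultaneously $H$-linear and $H$-colinear in both categories — so that everything reduces to matching the two compatibility conditions. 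Substituting the explicit $\rho$ and action yields
\[
\rho'(h\cdot m)=\alpha^{-1}_H(h_{12})\cdot m_{(0)}\otimes\big(\alpha^{-1}_H(h_2)\,\alpha^{-1}_H(m_{(1)})\big)\,S^{-1}\alpha^{-1}_H(h_{11}),
\]
and the remaining task is to simplify the right-hand side, using Hom-coassociativity to reorganize $h_{11}\otimes h_{12}\otimes h_2$, the relation $S^{-1}\alpha_H=\alpha_H S^{-1}$, and the antipode axioms to absorb the $\alpha_H$-powers, into precisely the defining Hom-Yetter-Drinfeld relation of $_H\mathcal{YD}^H$. The delicate part is the simultaneous handling of the iterated, $\alpha_H$-shifted coproducts and the $S^{-1}$-twist; once the two conditions are seen to coincide, the identity assignment is a functor in both directions and hence the desired isomorphism of categories.
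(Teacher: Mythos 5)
Your proposal is correct and follows essentially the same route as the paper: the heart of both arguments is the identity functor on underlying modules/comodules, with the Doi--Hopf compatibility $\rho'(h\cdot m)=\alpha^{-1}_H(h_{12})\cdot m_{(0)}\otimes\big(\alpha^{-1}_H(h_2)\alpha^{-1}_H(m_{(1)})\big)S^{-1}\alpha^{-1}_H(h_{11})$ matched to the Hom--Yetter--Drinfeld condition via Hom-coassociativity (reorganizing $h_{11}\otimes h_{12}\otimes h_2$ versus $h_1\otimes h_{21}\otimes h_{22}$) and $S^{-1}\alpha_H=\alpha_H S^{-1}$, which is precisely the paper's displayed computation. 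The only differences are in emphasis: you spell out the routine comodule/module-coalgebra axioms that the paper dismisses with ``apparently,'' while the paper additionally verifies the \emph{monoidal} Doi--Hopf datum conditions, which go beyond the stated category isomorphism and are needed only for the surrounding claim that the isomorphism is monoidal.
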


\begin{proof}
Apparently $H$ is a Doi-Hopf module over the datum $(H^{op}\o H,H,H)$. Next we will show that $_H\mathcal{M}(H^{op}\o H)^H$ is a monoidal category. Take $A=H$ and $C=H^{op}$ as Hom-bialgebra, for all $h,g,g'\in H$,
$$\begin{aligned}
&h_{(0)1}\o h_{(0)2}\o\a_H^2(h_{1})\cdot(g\bullet g')\\
=&\a_H^{-1}(h_{121})\o\a_H^{-1}(h_{122})\o(\a_H(h_2)\a_H^{-1}(g'g))S^{-1}\a_H(h_{11})\\
=&h_{12}\o h_{21}\o(h_{22}\a_H^{-1}(g'g))S^{-1}\a_H(h_{11})\\
=&h_{12}\o h_{21}\o(h_{22}g')(gS^{-1}(h_{11}))\\
=&\a_H^{-1}(h_{112})\o\a_H^{-1}(h_{212})\o[\a_H^{-1}(h_{22}g')(S^{-1}(\a_H^{-2}(h_{211}))\a_H^{-1}(h_{12}))](gS^{-1}\a_H^{-1}(h_{111}))\\
=&\a_H^{-1}(h_{112})\o\a_H^{-1}(h_{212})\o[\a_H^{-1}(h_{22}g')S^{-1}(\a_H^{-1}(h_{211}))][h_{12}(\a_H^{-1}(g)S^{-1}\a_H^{-2}(h_{111}))]\\
=&\a_H^{-1}(h_{112})\o\a_H^{-1}(h_{212})\\
&\o[(S^{-1}\a_H^{-2}(h_{111})\o\a_H^{-1}(h_{12}))\cdot g]\bullet[(S^{-1}(\a_H^{-2}(h_{211})\o\a_H^{-1}(h_{22})))\cdot g']\\
=&h_{1(0)}\o h_{2(0)}\o (h_{1(1)}\cdot g)\bullet(h_{1(2)}\cdot g'),
\end{aligned}$$
and
$$\varepsilon(h_{(0)})h_{(1)}\cdot 1=h_2S^{-1}(h_{1})=\varepsilon(h)1.$$

For any object $(M,\a_M)$ in $_H\mathcal{YD}^H$ and for all $h\in H,m\in M$,
$$\begin{aligned}
\r(h\cdot m)&=\a^{-1}_H(h_{21})\cdot m_{(0)}\o[\a^{-2}_H(h_{22})\a^{-1}_H(m_{(1)})]S^{-1}(h_1)\\
            &=\a^{-1}_H(h_{12})\cdot m_{(0)}\o[\a^{-1}_H(h_{2})\a^{-1}_H(m_{(1)})]S^{-1}\a^{-1}_H(h_{11})\\
            &=h_{(0)}\cdot m_{(0)}\o h_{(1)}\cdot m_{(1)}.
\end{aligned}$$
That is, $(M,\a_M)$ is an object in $_H\mathcal{M}(H^{op}\o H)^H$. Conversely for any object $(M,\a_M)$ in $_H\mathcal{M}(H^{op}\o H)^H$, by the same computation it is also an object in $_H\mathcal{YD}^H$. Thus these two categories are isomorphic and have the same monoidal structure.

The proof is completed.
\end{proof}

Let $(H,A,C)$ and $(H',A',C')$ be two Doi-Hopf data. A morphism $\varphi:(H,A,C)\rightarrow(H',A',C')$ consists of three maps $\theta:H\rightarrow H',$ $\b:A\rightarrow A'$ and $\g:C\rightarrow C'$ which are, respectively a Hom-Hopf algebra map, a Hom-algebra map and a Hom-coalgebra map satisfying
\begin{eqnarray}
&&\g(h\cdot c)=\theta(h)\cdot\g(c),\\
&&\r(\b(a))=\b(a_{(0)})\o\th(a_{(1)})
\end{eqnarray}
for all $h\in H,c\in C$ and $a\in A$.

\begin{proposition}
The functor $F:\ _A\mathcal{M}(H)^C\longrightarrow\ _{A'}\mathcal{M}(H)^{C'}$ given by
$$F(M)=A'\o_A M,$$
where the Doi-Hopf module structure on $A'\o_A M$ is defined in the following way
\begin{eqnarray}
&&a'\cdot(b'\o m)=\a_{A'}(a')b'\o\a_M(m),\\
&&\r(b'\o m)=b'_{(0)}\o m_{(0)}\o\a^{-2}_{M'}(b'_{(1)}\cdot \g(m_{(1)})),
\end{eqnarray}
for all $a',b'\in A'$ and $m\in M$.
\end{proposition}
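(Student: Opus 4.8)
The plan is to verify that $F(M) = A' \otimes_A M$ is a genuine object of $\ _{A'}\mathcal{M}(H)^{C'}$ by checking, in order, four things: that the formulas descend to well-defined maps on the balanced tensor product, that they equip $A' \otimes_A M$ with a left $A'$-module structure, that they equip it with a right $C'$-comodule structure, and finally that these two structures satisfy the Doi--Hopf compatibility condition $\r(a' \cdot x) = a'_{(0)} \cdot x_{(0)} \otimes a'_{(1)} \cdot x_{(1)}$. One must also check that $F$ acts sensibly on morphisms and respects composition, but the genuine content is the object-level verification.

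First I would address well-definedness, which I expect to be the main obstacle. The space $A' \otimes_A M$ is the quotient of $A' \otimes M$ by the subspace generated by relations $b'\b(a) \otimes \a_M(m) - \a_{A'}(b') \otimes a \cdot m$ (using the right $A$-module structure on $A'$ induced by $\b$ via $b' \cdot a = \a_{A'}^{-1}(b')\b(a)$, in the Hom-setting). I would substitute such a relation into both proposed structure maps and show each vanishes. For the action this uses that $\b$ is a Hom-algebra map together with the multiplicativity identity $\a_{A'}(a')b' = \a_{A'}(\cdots)$; for the coaction the crucial input is the morphism compatibility $\r(\b(a)) = \b(a_{(0)}) \otimes \th(a_{(1)})$ together with $\g(h \cdot c) = \th(h)\cdot \g(c)$ and the fact that $M$ is itself a Doi--Hopf module, so that $\r(a\cdot m) = a_{(0)}\cdot m_{(0)} \otimes a_{(1)}\cdot m_{(1)}$. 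These are precisely the two conditions in the definition of a morphism of Doi--Hopf data, so their appearance here is exactly what the definition was designed to supply; the delicate point is bookkeeping the various $\a^{\pm}$ twists so that the two sides of the comodule relation match after applying $\rho$.

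Next I would check the module and comodule axioms themselves. For the left $A'$-module axioms I verify $1_{A'}\cdot(b'\otimes m) = \a_{A'}\otimes \a_M$ evaluated appropriately, the twist-compatibility $\a(a'\cdot x) = \a_{A'}(a')\cdot \a(x)$, and the Hom-associativity $\a_{A'}(a')\cdot(d'\cdot x) = (a'd')\cdot \a(x)$; all three reduce to the Hom-associative algebra axioms for $A'$ after unwinding the formula $a'\cdot(b'\otimes m)=\a_{A'}(a')b'\otimes \a_M(m)$. For the right $C'$-comodule axioms I check the counit, twist, and coassociativity conditions for $\r(b'\otimes m)=b'_{(0)}\otimes m_{(0)}\otimes\a_{C'}^{-2}(b'_{(1)}\cdot\g(m_{(1)}))$; these follow from $A'$ being an $H$-comodule algebra, $M$ being a $C$-comodule, $C'$ being an $H$-module coalgebra (so that $\D_{C'}(h\cdot c)=\D_H(h)\cdot\D_{C'}(c)$), and $\g$ being a Hom-coalgebra map. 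The factor $\a_{C'}^{-2}$ is tuned so that the coassociativity twist identities balance, and I would confirm this by comparing the two sides of $(\r\otimes\a_{C'})\circ\r=(\a\otimes\D_{C'})\circ\r$.

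Finally I would verify the Doi--Hopf compatibility condition for $F(M)$, combining the action and coaction formulas and invoking that $A'$ is a right $H$-comodule algebra (so $\r$ is multiplicative) together with the $H$-module coalgebra structure of $C'$; this is the same style of computation as in the proof of Proposition~2.3, with the morphism data $(\th,\b,\g)$ inserted. I expect no new idea beyond careful tracking of the twisting automorphisms, so the well-definedness over $A$ remains the single step where the morphism-of-data hypotheses are essential and where an error would most easily hide.
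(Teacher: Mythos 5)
Your proposal is correct and takes essentially the same approach as the paper: the paper's entire proof consists of your final step --- the computation of $\r(a'\cdot(b'\o m))$ verifying the Doi--Hopf compatibility, using that $A'$ is a right $H$-comodule algebra and that $\g(h\cdot c)=\th(h)\cdot\g(c)$ --- while it dismisses well-definedness over $\o_A$ and the module/comodule axioms with the single word ``clearly''. Your explicit treatment of well-definedness on the balanced tensor product, where the morphism conditions $\r(\b(a))=\b(a_{(0)})\o\th(a_{(1)})$ and the Doi--Hopf property of $M$ are the essential inputs, supplies precisely what the paper omits, so your plan is, if anything, more complete than the published argument.
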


\begin{proof}
Clearly for any object $(M,\a_M)$ in $_A\mathcal{M}(H)^C$, $A'\o_A M$ is a left $A'$-module and right $C'$-comodule. Then for all $a',b'\in A'$ and $m\in M$,
$$\begin{aligned}
\r(a'\cdot (b'\o m))&=\r(\a_{A'}(a')b'\o\a_M(m))\\
                    &=\a_{A'}(a'_{(0)})b'_{(0)}\o\a_M(m_{(0)})\o\a^{-2}_{C'}(\a_{A'}(a'_{(1)})b'_{(1)}\cdot\g(\a_M(m_{(1)})))\\
                    &=a'_{(0)}\cdot(b'_{(0)}\o m_{(0)})\o a'_{(1)}\cdot\a^{-2}_{C'}(b'_{(1)}\cdot\g(m_{(1)}))\\
                    &=a'_{(0)}\cdot(b'\o m)_{(0)}\o a'_{(1)}\cdot(b'\o m)_{(1)}.
\end{aligned}$$

This completes the proof.

\end{proof}

\begin{proposition}
The functor $G:\ _{A'}\mathcal{M}(H)^{C'}\longrightarrow\ _A\mathcal{M}(H)^C$ given by
$$G(N')=N'\square_{C'}C,$$
where the Doi-Hopf module structure on $N'\square_{C'}C$ is defined in the following way
\begin{eqnarray}
&&a\cdot(n'\o c)=\b(a_{(0)})\cdot n'\o a_{(1)}\cdot c,\\
&&\r(n'\o c)=\a_{N'}(n')\o c_1\o\a^{-1}_C(c_2),
\end{eqnarray}
for all $a\in A,c\in C$ and $n'\in N'$.

Moreover $(F,G)$ is a pair of adjoint functors.
\end{proposition}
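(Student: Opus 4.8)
The plan is to establish the adjunction $(F,G)$ by exhibiting the unit and counit natural transformations and verifying the triangle identities, exactly as one proves a hom-tensor/cotensor adjunction in the classical (non-Hom) setting, but keeping careful track of the twisting maps $\a_A,\a_C,\a_M$ and the factors $\a^{-1},\a^{-2}$ appearing in the structures of $F$ and $G$. First I would construct the unit $\eta_M:M\lr G(F(M))=(A'\o_A M)\square_{C'}C$ and the counit $\varepsilon_{N'}:F(G(N'))=A'\o_A(N'\square_{C'}C)\lr N'$. The natural candidate for $\eta_M$ sends $m$ to $\sum(1_{A'}\o m_{(0)})\o \g(m_{(1)})$ (suitably corrected by powers of $\a_C$ so that the image genuinely lies in the cotensor product over $C'$), and the counit $\varepsilon_{N'}$ sends $a'\o(n'\o c)$ to $\a_{A'}(a')\cdot\bigl(\varepsilon_C(c)n'\bigr)$ using the $A'$-action on $N'$ together with the counit of $C$.

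The key steps, in order, are: (i) check that $\eta_M$ is well-defined, i.e. that its image lands in the cotensor product $N'\square_{C'}C$ with $N'=A'\o_A M$ — this amounts to verifying the coassociativity-type identity defining $\square_{C'}$ using the right $C'$-comodule structure on $A'\o_A M$ from Proposition 2.6 and the fact that $\g$ is a Hom-coalgebra map; (ii) check that $\eta_M$ and $\varepsilon_{N'}$ are morphisms in the respective Doi-Hopf module categories, i.e. they are simultaneously left-module and right-comodule maps compatible with the $\a$'s, which reduces to the module-coalgebra and comodule-algebra axioms together with the morphism-of-data conditions (2.7)–(2.8) relating $\b,\g,\th$; (iii) check naturality of $\eta$ and $\varepsilon$ in $M$ and $N'$, which is routine once well-definedness is in hand; and (iv) verify the two triangle identities $\varepsilon_{F(M)}\circ F(\eta_M)=\mathrm{id}_{F(M)}$ and $G(\varepsilon_{N'})\circ\eta_{G(N')}=\mathrm{id}_{G(N')}$. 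In the triangle identities the composite will produce a cancellation of the form $\a_{A'}(1_{A'})b'=b'$ and $\varepsilon_C(c_1)\a^{-1}_C(c_2)=\a^{-1}_C\a_C(c)=c$ up to the twisting, so the main computational content is confirming that the accumulated powers of $\a$ cancel exactly to the identity.

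The hard part, as usual for Hom-type structures, is not the categorical skeleton — which is formally identical to the classical Doi-Hopf case — but the bookkeeping of the endomorphisms $\a$. Specifically, the delicate point is step (i): because the comodule structure on $A'\o_A M$ in Proposition 2.6 carries a factor $\a^{-2}_{M'}$ and applies the $H$-action through $\b,\th,\g$, one must choose the normalization of $\eta_M$ (the precise power of $\a_C$ or $\a_{A'}$ inserted) so that the cotensor condition $\sum \mu(x_i)\o c_{i(-1)}'\o c_{i(0)}' = \sum x_{i(0)}\o x_{i(1)}\o \gamma(\cdots)$ holds on the nose. A wrong power of $\a$ will make $\eta_M$ land outside the cotensor product, and the same normalization must then be the one that makes the triangle identities close. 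I would therefore fix the powers of $\a$ by first imposing well-definedness and then confirming, rather than guessing, that those same powers satisfy the triangle identities; this is the step I expect to require the most care.
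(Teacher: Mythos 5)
Your overall strategy --- exhibit a unit and a counit and verify the triangle identities --- is exactly the paper's, but as written the proposal has genuine gaps. The first is that you never prove the opening assertion of the proposition: that $G(N')=N'\square_{C'}C$, with the stated action and coaction, is an object of $_A\mathcal{M}(H)^C$ at all. This is not part of the ``formally identical categorical skeleton'': one must check that $a\cdot(n'\o c)=\b(a_{(0)})\cdot n'\o a_{(1)}\cdot c$ and $\r(n'\o c)=\a_{N'}(n')\o c_1\o\a^{-1}_C(c_2)$ carry the cotensor product back into itself --- the cotensor condition here reads $\a_{N'}(n')\o\g(c_1)\o c_2=n'_{(0)}\o n'_{(1)}\o\a_C(c)$, since the left $C'$-coaction on $C$ goes through $\g$ --- and that the Doi-Hopf compatibility holds on $N'\square_{C'}C$. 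These verifications use the data-morphism identities $\g(h\cdot c)=\th(h)\cdot\g(c)$ and $\r(\b(a))=\b(a_{(0)})\o\th(a_{(1)})$ together with the comodule-algebra and module-coalgebra axioms, and they occupy the entire first half of the paper's proof; without them the adjunction statement concerns a functor that has not been shown to exist.

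The second gap is in your formulas. The proposed unit $\eta_M(m)=\sum(1_{A'}\o m_{(0)})\o\g(m_{(1)})$ does not typecheck: the codomain $(A'\o_A M)\square_{C'}C$ sits inside $(A'\o_A M)\o C$, so the last tensor factor must lie in $C$, whereas $\g(m_{(1)})\in C'$. No choice of powers of $\a_C$ or $\a_{C'}$ can repair this, because the defect is the target space, not the normalization; $\g$ should appear only when verifying the cotensor condition, never in the formula itself. The correct unit, as in the paper, is $\eta_M(m)=(1\o\a_M(m_{(0)}))\o m_{(1)}$. Likewise the counit must be $\delta_{N'}(a'\o(n'\o c))=\varepsilon(c)\,\a^{-2}_{A'}(a')\cdot\a^{-3}_{N'}(n')$: combining the corrected unit with your candidate $\a_{A'}(a')\cdot(\varepsilon_C(c)n')$, the composite $\delta_{F(M)}\circ F(\eta_M)$ sends $a'\o m$ to $\a^{3}_{A'}(a')\o\a^{3}_M(m)$, which is not $a'\o m$ in $A'\o_A M$ (the tensor product over $A$ does not identify $\a$-twists), so the triangle identity fails. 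You do state, reasonably, that the powers of $\a$ are to be fixed by first imposing well-definedness and then confirming the triangle identities; but in the Hom-setting that bookkeeping is precisely the mathematical content of the proposition, so deferring all of it leaves the proof essentially unwritten.
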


\begin{proof}
First of all, we need to show the action and coaction on $G(N')$ is well defined for any object $N'\in\ _{A'}\mathcal{M}(H)^{C'}$. For all $a\in A,c\in C$ and $n'\in N'$,
$$\begin{aligned}
&(\b(a_{(0)})\cdot n')_{(0)}\o(\b(a_{(0)})\cdot n')_{(1)}\o\a_H(a_{(1)})\cdot \a_C(c)\\
&=\b(a_{(0)(0)})\cdot n'_{(0)}\o a_{(0)(1)}\cdot n'_{(1)}\o\a_H(a_{(1)})\cdot \a_C(c)\\
&=\a_{A'}(\b(a_{(0)}))\cdot \a_{N'}(n')\o a_{(1)1}\cdot \g(c_1)\o a_{(1)2}\cdot c_2,
\end{aligned}$$
thus the action is well defined. And
$$\begin{aligned}
&\a_{N'}(n')_{(0)}\o\a_{N'}(n')_{(1)}\o \a_C(c_1)\o\a^{-1}_C(c_2)\\
&=\a_{N'}(n'_{(0)})\o\a_{C'}(n'_{(1)})\o \a_C(c_1)\o\a^{-1}_C(c_2)\\
&=\a^2_{N'}(n')\o\a_{C'}(\g(c_1))\o c_{21}\o\a^{-2}_C(c_{22})\\
&=\a^2_{N'}(n')\o\g(c_{11})\o c_{12}\o\a^{-1}_C(c_{2}),
\end{aligned}$$
thus the coaction is well defined. It is straightforward to verify that $G(N')$ is a left $A$-module and right $C$-comodule. For the compatibility we have
$$\begin{aligned}
\r(a\cdot(n'\o c))&=\a_{A'}(\b(a_{(0)}))\cdot \a_{N'}(n')\o a_{(1)1}\cdot c_1\o \a^{-1}_H(a_{(1)2})\cdot \a^{-1}_C(c_2)\\
                  &=\b(a_{(0)(0)})\cdot \a_{N'}(n')\o a_{(0)(1)}\cdot c_1\o a_{(1)}\cdot \a^{-1}_C(c_2)\\
                  &=a_{(0)}\cdot(\a_{N'}(n')\o c_1)\o a_{(1)}\cdot\a^{-1}_C(c_2).
\end{aligned}$$
Then $G(N')$ is an object in $_A\mathcal{M}(H)^C$. Finally we show $(F,G)$ is a pair of adjoint functors. Define the linear maps
$\eta_M:M\longrightarrow GF(M)$ by
$$m\mapsto (1\o \a_M(m_{(0)}))\o m_{(1)},$$
and
$\delta_{M'}:FG(M')\longrightarrow M'$ by
$$a'\o(m'\o c)\mapsto \varepsilon(c)\a^{-2}_{A'}(a')\cdot \a^{-3}_{M'}(m'),$$

For all $a\in A$ and $m\in M$,
$$\begin{aligned}
\eta_M(a\cdot m)=&(1\o \a_M(a_{(0)}\cdot m_{(0)}))\o a_{(1)}\cdot m_{(1)}\\
                =&\b(\a^{2}(a_{(0)}))\o\a^2_M(m_{(0)})\o a_{(1)}\cdot m_{(1)}\\
                =&a\cdot\eta_M(m),
\end{aligned}$$
and
$$\begin{aligned}
\r(\eta_M(m))=&(1\o \a^2_M(m_{(0)}))\o m_{(1)1}\o\a^{-1}_C(m_{(1)2})\\
             =&(1\o \a_M(m_{(0)(0)}))\o m_{(0)(1)}\o m_{(1)}\\
             =&\eta_M(m_{(0)})\o m_{(1)}.
\end{aligned}$$
Hence $\eta_M$ is a morphism in $_A\mathcal{M}(H)^C$. Similarly we can check that $\delta_{M'}$ is a morphism in $_{A'}\mathcal{M}(H)^{C'}$. For all $m'\in M,c\in C$,
$$\begin{aligned}
&G(\delta_{M'})\circ\eta_{G(M')}(m'\o c)\\
&=G(\delta_{M'})(1\o\a^2_{M'}(m')\o\a_C(c_1)\o\a^{-1}_C(c_1))\\
&=m'\o c,
\end{aligned}$$
and for all $a'\in A',m\in M$,
$$\begin{aligned}
&\delta_{F(M)}\circ F(\eta_{M})(a'\o m)\\
&=\delta_{F(M)}(a'\o((1\o \a_M(m_{(0)}))\o m_{(1)}))\\
&=\a^{-2}_{A'}(a')\cdot(1\o\a^{-2}_M(m_{(0)}))\varepsilon(m_{(1)})\\
&=a'\o m.
\end{aligned}$$

The proof is completed.
\end{proof}

In the rest of this section, we will establish tensor identities involving the pair of adjoint functors $(F,G)$.

\begin{theorem}
Let $(id,id,\g):(H,A,C)\longrightarrow (H,A,C')$ be a monoidal morphism of Doi-Hopf data. Then $G(C')=C$.

Furthermore let $M\in\  _A\mathcal{M}(H)^C$ and $N\in\  _A\mathcal{M}(H)^{C'}$ be arbitrary. If $(C,\a_C)$ is a Hom-Hopf algebra, we have the following isomorphism in $_A\mathcal{M}(H)^C$
\begin{eqnarray}
&&M\o G(N)\simeq G(F(M)\o N).
\end{eqnarray}

If $(C,\a_C)$ has a twisted antipode, then
\begin{eqnarray}
&&G(N)\o M\simeq G(N\o F(M)).
\end{eqnarray}
\end{theorem}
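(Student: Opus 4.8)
The plan is to establish the three assertions by exhibiting explicit maps built from the Hom-bialgebra (respectively Hom-Hopf) structure on $C$, and then checking well-definedness on the cotensor products together with $A$-linearity and $C$-colinearity.

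For the identification $G(C')=C$: since the morphism is $(id,id,\g)$, the functor $G$ sends $N'$ to $N'\square_{C'}C$, where $C$ carries the left $C'$-comodule structure induced by $\g$, namely $c\mapsto\g(c_1)\o\a^{-1}_C(c_2)$ up to a fixed power of $\a_C$. Viewing $C'$ as a Doi-Hopf module over $(H,A,C')$ as in Remark 2.2 (coaction $\r(c')=c'_1\o\a^{-1}_{C'}(c'_2)$), I would compute $C'\square_{C'}C$ and show that the counit contraction $c'\o c\mapsto\v(c')\,\a^{-1}_C(c)$ is an isomorphism onto $C$, with inverse the left $C'$-coaction $c\mapsto\g(c_1)\o\a^{-1}_C(c_2)$. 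This is the Hom-analogue of the standard fact $D\square_D L\cong L$; after pinning down the twists one verifies it is a morphism in $_A\mathcal{M}(H)^C$ by comparing the $A$-actions and $C$-coactions on both sides.

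For the isomorphism $M\o G(N)\simeq G(F(M)\o N)$, where $C$ is assumed to be a Hom-Hopf algebra, I would define mutually inverse maps
\[
\Phi:M\o(N\square_{C'}C)\lr(F(M)\o N)\square_{C'}C,\qquad m\o\sum_i n_i\o c_i\mapsto\sum_i(m_{(0)}\o n_i)\o(m_{(1)}c_i),
\]
using the $C$-coaction on $M$ and the product of $C$ (up to a fixed power of $\a_C$), and
\[
\Psi:(F(M)\o N)\square_{C'}C\lr M\o(N\square_{C'}C),\qquad \sum_j(m_j\o n_j)\o c_j\mapsto\sum_j m_{j(0)}\o\bigl(n_j\o S(m_{j(1)})c_j\bigr),
\]
using the antipode $S$ of $C$. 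The key verifications are: (i) $\Phi$ lands in the cotensor product, which follows from the defining relation of $N\square_{C'}C$ together with coassociativity of the $C$-coaction on $M$ and the fact that $\g$ is a coalgebra map, so $\Delta_{C'}\circ\g=(\g\o\g)\circ\Delta_C$; (ii) $\Psi$ is well-defined into $M\o(N\square_{C'}C)$; and (iii) $\Phi$ and $\Psi$ are mutually inverse, which collapses to the antipode axiom $S(c_1)c_2=\v(c)1$ and the counit property of the coaction. Finally I would check that $\Phi$ is $A$-linear and $C$-colinear, so that it is an isomorphism in $_A\mathcal{M}(H)^C$. For the companion statement $G(N)\o M\simeq G(N\o F(M))$ the construction is the mirror image with the tensor factors reversed: the analogous map sends $(\sum_i n_i\o c_i)\o m$ to $\sum_i(n_i\o m_{(0)})\o(c_im_{(1)})$, and to invert it one must peel $m_{(1)}$ off the \emph{right} of the product $c_im_{(1)}$, which requires $c_2\bar S(c_1)=\v(c)1$ rather than $S(c_1)c_2=\v(c)1$. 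This is precisely why this case needs the twisted antipode $\bar S$ (equivalently $S^{-1}$); the inverse is $\sum_j(n_j\o m_j)\o c_j\mapsto\sum_j(n_j\o c_j\bar S(m_{j(1)}))\o m_{j(0)}$, and the same well-definedness and bijectivity checks go through with $S$ replaced by $\bar S$.

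The main obstacle will be the bookkeeping in the Hom-setting: because Hom-associativity and Hom-coassociativity insert powers of $\a_C$ (and of $\a_M,\a_N$) at every step, the precise twists in the definitions of $\Phi$ and $\Psi$ must be chosen so that both the images genuinely satisfy the cotensor relation and the compositions $\Psi\Phi$ and $\Phi\Psi$ reduce to the identity after applying the antipode and counit axioms. Establishing well-definedness into the cotensor product—where one must commute the $C'$-coaction past the multiplication by $m_{(1)}$ and invoke the monoidal datum compatibility (\ref{A})—is the most delicate point; once the twists are fixed there, the remaining module/comodule compatibility checks are routine.
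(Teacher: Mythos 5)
Your proposal is correct and follows essentially the same route as the paper: the counit contraction $c'\o c\mapsto\v(c')c$ with inverse built from $\g$ and $\Delta_C$ for $G(C')=C$, and the maps $\Phi_1(m\o(n\o c))=(m_{(0)}\o n)\o\a^{-2}_C(m_{(1)}c)$, $\Psi_1((m\o n)\o c)=\a^{-2}_M(m_{(0)})\o(n\o S_C(\a^{-2}_C(m_{(1)}))c)$ for the tensor identity, with the same verification scheme (cotensor well-definedness, $A$-linearity via (\ref{A}), $C$-colinearity, and the antipode axiom collapsing the compositions). Your explicit treatment of the mirror case via the twisted antipode identity $c_2\bar S(c_1)=\v(c)1$ correctly fills in what the paper dismisses with ``similarly one can prove.''
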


\begin{proof}
First of all, define linear map $f:C'\square_{C'}C\rightarrow C,$ by $f(c'\o c)=\varepsilon(c')c$. For all $a\in A$,
$$\begin{aligned}
f(a\cdot(c'\o c))=&f(a_{(0)}\cdot c'\o a_{(1)}\cdot c)\\
                 =&f(\varepsilon(a_{(0)(0)})a_{(0)(1)}\cdot c'\o a_{(1)}\cdot c)\\
                 =&\varepsilon(a_{(0)})\varepsilon(c')a_{(1)}\cdot c\\
                 =&\varepsilon(c')a\cdot c=a\cdot f(c'\o c),
\end{aligned}$$
that is, $f$ is $A$-linear. And
$$\begin{aligned}
f(c'\o c)_{(0)}\o f(c'\o c)_{(1)}=&\varepsilon(c')c_1\o\a^{-1}_C(c_2)\\
                                 =&f((c'\o c)_{(0)})\o (c'\o c)_{(1)},
\end{aligned}$$
that is, $f$ is $C$-colinear.

Define $g:C\rightarrow C'\square_{C'}C$ by $g(c)=\g\a^{-1}_C(c_1)\o\a^{-1}_C(c_2)$. Easy to see that $g(c)\in C'\square_{C'}C$, and for all $c\in C,c'\in C'$,
$$\begin{aligned}
g(f(c'\o c))&=g(\varepsilon(c')c)=\varepsilon(c')\g\a^{-1}_C(c_1)\o\a^{-1}_C(c_2)\\
            &=\varepsilon(c'_1)\a^{-1}_C(c'_2)\o c=c'\o c,
\end{aligned}$$
and
$$\begin{aligned}
f(g(c))=f(\g\a^{-1}_C(c_1)\o\a^{-1}_C(c_2))=c.
\end{aligned}$$
Thus $f$ is bijective, and $G(C')=C$.

For any object $M\in\  _A\mathcal{M}(H)^C$ and $N\in\  _A\mathcal{M}(H)^{C'}$, define $\Phi_1:M\o(N\square_{C'}C)\longrightarrow (M\o N)\square_{C'}C$ by
$$m\o(n\o c)\mapsto(m_{(0)}\o n)\o\a^{-2}_C(m_{(1)}c),$$
for all $m\in M,n\in N$ and $c\in C$. Easy to see $\Phi_1$ is well defined. We need to show that $\Phi_1$ is a morphism in $M\in\  _A\mathcal{M}(H)^C$. In fact for all $a\in A$,
$$\begin{aligned}
\Phi_1(a\cdot (m\o(n\o c)))&=\Phi_1(a_1\cdot m\o(a_{2(0)}\cdot n\o a_{2(1)}\cdot c))\\
                           &=(a_{1(0)}\cdot m_{(0)}\o a_{2(0)}\cdot n)\o\a^{-2}_C((a_{1(1)}\cdot m_{(1)})(a_{2(1)}\cdot c))\\
                           &\stackrel{(\ref{A})}=(a_{(0)1}\cdot m_{(0)}\o a_{(0)2}\cdot n)\o a_{(1)}\cdot\a^{-2}_C(m_{(1)}c)\\
                           &=a_{(0)}\cdot (m_{(0)}\o n)\o a_{(1)}\cdot\a^{-2}_C(m_{(1)}c)\\
                           &=a\cdot\Phi_1(m\o(n\o c)),
\end{aligned}$$
and
$$\begin{aligned}
\r(\Phi_1(m\o(n\o c)))=&(\a_M(m_{(0)})\o\a_N(n))\o\a^{-2}_C(m_{(1)1}c_1)\o\a^{-3}_C(m_{(1)2}c_2)\\
                      =&(m_{(0)(0)}\o\a_N(n))\o\a^{-2}_C(m_{(0)(1)}c_1)\o\a^{-2}_C(m_{(1)}\a^{-1}_C(c_2))\\
                      =&\Phi_1(m_{(0)}\o(\a_N(n)\o c_1))\o\a^{-2}_C(m_{(1)}\a^{-1}_C(c_2))\\
                      =&\Phi_1((m\o(n\o c))_{(0)})\o(m\o(n\o c))_{(1)},
\end{aligned}$$
as required. Finally we show that $\Phi_1$ is bijective. Define the linear map $\Psi_1:(M\o N)\square_{C'}C\longrightarrow M\o(N\square_{C'}C)$ by
$$(m\o n)\o c \mapsto \a^{-2}_M(m_{(0)})\o(n\o S_C(\a^{-2}_C(m_{(1)}))c),$$
for all $m\in M,n\in N$ and $c\in C$.

Before proceeding we have to verify that $\Psi_1$ is well defined. For $(m\o n)\o c\in(M\o N)\square_{C'}C$, we have the relation
\begin{eqnarray}
&&m_{(0)}\o n_{(0)}\o\a^{-2}_C(\g(m_{(1)})n_{(1)})\o \a_C(c)=\a_M(m)\o \a_N(n)\o \g(c_1)\o c_2.
\end{eqnarray}

Applying $(id\o\g\o id)\circ(id\o\Delta_C\circ S_C)\circ\r_M\circ\a^{-3}_M$ to the above identity, we obtain that
$$\begin{aligned}
&\a^{-3}_M(m_{(0)(0)})\o\g S_C\a^{-3}_C(m_{(0)(1)2})\o S_C\a^{-3}_C(m_{(0)(1)1})\o n_{(0)}\o\a^{-2}_C(\g(m_{(1)})n_{(1)})\o \a_C(c)\\
&=\a^{-2}_M(m_{(0)})\o\g S_C\a^{-2}_C(m_{(1)2})\o S_C\a^{-2}_C(m_{(1)1})\o\a_N(n)\o \g(c_1)\o c_2.
\end{aligned}$$

Multiplying the second and the fifth factor, also the third and sixth factor, we obtain that
$$\begin{aligned}
&\a^{-3}_M(m_{(0)(0)})\o n_{(0)}\o\g S_C\a^{-3}_C(m_{(0)(1)2})\a^{-2}_C(\g(m_{(1)})n_{(1)})\o S_C\a^{-3}_C(m_{(0)(1)1})\a_C(c)\\
&=\a^{-2}_M(m_{(0)})\o\a_N(n)\o \g S_C\a^{-2}_C(m_{(1)2})\g(c_1)\o S_C\a^{-2}_C(m_{(1)1})c_2.
\end{aligned}$$

Thus
$$\begin{aligned}
&\a^{-2}_M(m_{(0)})\o n_{(0)}\o n_{(1)}\o S_C\a^{-1}_C(m_{(1)})\a_C(c)\\
&=\a^{-2}_M(m_{(0)})\o\a_N(n)\o \g S_C\a^{-2}_C(m_{(1)2})\g(c_1)\o S_C\a^{-2}_C(m_{(1)1})c_2,
\end{aligned}$$
which means that $\Psi_1((m\o n)\o c)\in M\o(N\square_{C'}C)$. Hence it is well defined.
Now let us check that $\Psi_1$ and $\Phi_1$ are mutual inverses.
$$\begin{aligned}
\Psi_1\circ\Phi_1(m\o(n\o c))=&\Psi_1((m_{(0)}\o n)\o\a^{-2}_C(m_{(1)}c))\\
                             =&\a^{-2}_M(m_{(0)(0)})\o(n\o S_C(\a^{-2}_C(m_{(0)(1)}))(\a^{-2}_C(m_{(1)}c))\\
                             =&\a^{-1}_M(m_{(0)})\o(n\o S_C(\a^{-2}_C(m_{(1)1}))(\a^{-3}_C(m_{(1)2})\a^{-2}_C(c))\\
                             =&m\o (n\o c),
\end{aligned}$$
and
$$\begin{aligned}
\Phi_1\circ\Psi_1((m\o n)\o c)=&\Psi_1(\a^{-2}_M(m_{(0)})\o(n\o S_C(\a^{-2}_C(m_{(1)}))c))\\
                             =&\a^{-2}_M(m_{(0)(0)})\o n\o \a^{-2}_C(\a^{-2}_C(m_{(0)(1)})(S_C(\a^{-2}_C(m_{(1)}))c))\\
                             =&(m\o n)\o c.
\end{aligned}$$
Hence $M\o G(N)\simeq G(F(M)\o N)$. Similarly one can prove $G(N)\o M\simeq G(N\o F(M))$.

The proof is completed.
\end{proof}

\begin{corollary}
Let $(H,A,C)$ be a monoidal Doi-Hopf datum, and $T:\ _A\mathcal{M}(H)^C\longrightarrow\ _HM$ a functor forgetting the right $C$-coaction. Then for any Doi-Hopf module $M$ we have the isomorphism in $_A\mathcal{M}(H)^C$:
$$M\o C\simeq T(M)\o C.$$
\end{corollary}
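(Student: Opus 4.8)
The plan is to deduce this from Theorem 2.8 by specializing to $C'=k$ and $N=k$. First I would take $C'=k$ to be the trivial Hom-bialgebra and $\g=\v_C:C\lr k$; then $(\mathrm{id},\mathrm{id},\v_C):(H,A,C)\lr(H,A,k)$ is a monoidal morphism of Doi-Hopf data, the only points needing checking being that $\v_C$ is a Hom-bialgebra map and that $\v_C(h\c c)=\v_H(h)\v_C(c)$, which is exactly the module-coalgebra axiom. Since a right $k$-comodule is just a vector space (with coaction forced to be $\a_M$), the category ${}_A\mathcal{M}(H)^k$ is identified with the category of left $A$-modules, and under this identification the induction functor $F(M)=A\o_A M\cong M$ is precisely the functor $T$ forgetting the $C$-coaction.

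Next I would evaluate both sides of the first tensor identity of Theorem 2.8 at $N=k$. On one side $G(k)=k\square_k C\cong C$, and feeding the structure maps of Proposition 2.6 through this identification gives the action $a\c(1\o c)=\v(a_{(0)})1\o a_{(1)}\c c$ and coaction $\r(1\o c)=1\o c_1\o\a^{-1}_C(c_2)$, i.e.\ exactly the Doi-Hopf module $C$ of Remark 2.2; hence $M\o G(k)\cong M\o C$. On the other side $k$ is the unit object, so $F(M)\o k\cong F(M)\cong T(M)$ through the right unit constraint $r$, whence $G(F(M)\o k)\cong G(T(M))=T(M)\square_k C\cong T(M)\o C$. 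Combining these with the isomorphism $M\o G(N)\simeq G(F(M)\o N)$ of Theorem 2.8 produces $M\o C\simeq T(M)\o C$ in ${}_A\mathcal{M}(H)^C$; explicitly the map is $m\o c\mapsto m_{(0)}\o\a^{-2}_C(m_{(1)}c)$ with inverse $m\o c\mapsto m_{(0)}\o S_C(\a^{-2}_C(m_{(1)}))c$.

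I expect the only real obstacle to be bookkeeping. The identifications $A\o_A M\cong M$ and $k\square_k C\cong C$ are not the naive identity maps but carry powers of $\a$ inherited from the Hom-unit constraints $l,r$ and from the cotensor product, so the delicate step is to track these $\a^{\pm1}$ twists and confirm that $F\cong T$ and $G(k)\cong C$ hold as Doi-Hopf modules rather than merely as vector spaces. I would also flag that the appeal to the first isomorphism of Theorem 2.8 requires $(C,\a_C)$ to be a Hom-Hopf algebra, its antipode being what inverts the twisting map above; thus the corollary is understood with this hypothesis in force, after which the result is immediate.
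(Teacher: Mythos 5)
Your proposal is correct and follows essentially the same route as the paper: the paper's proof also uses the monoidal morphism $(id,id,\varepsilon):(H,A,C)\rightarrow(H,A,k)$, identifies ${}_A\mathcal{M}(H)^k$ with ${}_A\mathcal{M}$ so that $F$ becomes the forgetful functor $T$, and then specializes the isomorphism $M\o G(N)\simeq G(F(M)\o N)$ of Theorem 2.8 at $N=k$ to get $M\o C\simeq M\o G(k)\simeq G(F(M)\o k)=GF(M)=T(M)\o C$. Your additional remarks (the explicit $\Phi_1$/$\Psi_1$ maps, the $\alpha$-power bookkeeping in the identifications, and the implicit hypothesis that $(C,\a_C)$ be a Hom-Hopf algebra for Theorem 2.8 to apply) are accurate refinements of what the paper leaves tacit.
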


\begin{proof}
We have the following monoidal morphism
$$(id,id,\varepsilon):(H,A,C)\rightarrow(H,A,k).$$
Obviously $_AM(H)^k\simeq\  _AM$, and $F:_A\mathcal{M}(H)^C\simeq\  _AM$ is the functor forgetting the $C$-coaction. Hence
$$\begin{aligned}
M\o C\simeq M\o G(k)&\simeq G(F(M)\o k)\\
&=GF(M)=F(M)\o C=T(M)\o C.
\end{aligned}$$

The proof is completed.
\end{proof}

We have the dual version of Theorem 2.8.

\begin{theorem}
Let $(id,\la,id):(H,A,C)\longrightarrow (H,A',C)$ be a monoidal morphism of Doi-Hopf data. Then $F(A)=A'$.

For all $M\in\ _A\mathcal{M}(H)^C$ and $N\in\ _{A'}M(H)^{C}$, suppose that $(A,\a_A)$ is a Hom-Hopf algebra, we have the following isomorphism in $M\in\  _{A'}\mathcal{M}(H)^C$
\begin{eqnarray}
&&F(M)\o N\simeq F(M\o G(N)).
\end{eqnarray}

If $(A',\a_{A'})$ has a twisted antipode,
\begin{eqnarray}
&&N\o F(M)\simeq F(G(N)\o M).
\end{eqnarray}
\end{theorem}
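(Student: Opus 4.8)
The plan is to recognize Theorem 2.10 as the exact formal dual of Theorem 2.8, obtained by reversing all arrows (swapping the roles of module/comodule, algebra/coalgebra, action/coaction). Under the morphism $(id,\la,id):(H,A,C)\to(H,A',C)$, the functor $F(M)=A'\o_A M$ now plays the role that $G$ played in Theorem 2.8, while $G(N)=N\square_C C=N$ (since $C'=C$) plays the role of $F$. First I would establish $F(A)=A'$ directly: the map $A'\o_A A\to A'$ given by $a'\o a\mapsto \v(\ldots)a'\a_{A}^{-1}(a)$ (the multiplication composed with $\la$) and its inverse built from the comultiplication of $A$ should be mutually inverse morphisms in $_{A'}\mathcal{M}(H)^C$, mirroring the $f,g$ pair that gave $G(C')=C$. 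The verification that these are $A'$-linear and $C$-colinear is the dualized analogue of the short computations following Definition of $f$ and $g$.

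For the main isomorphism $F(M)\o N\simeq F(M\o G(N))$, I would dualize $\Phi_1$. Concretely, define
\begin{eqnarray*}
\Phi_2:(A'\o_A M)\o N\longrightarrow A'\o_A(M\o G(N)),
\end{eqnarray*}
sending $(a'\o m)\o n\mapsto a'_{(0)}\o\bigl(m\o(n\o \ldots)\bigr)$ with the $H$-component of $a'$ acting to bind the $A'$-comodule structure of $N$, precisely opposite to how $\Phi_1$ used $m_{(1)}$ to act on $c$. Since $G(N)=N$ here and $F$ is a quotient by the $A$-action, the inverse $\Psi_2$ must use the antipode of $A$ (respectively $A'$) to untwist, exactly as $\Psi_1$ used $S_C$. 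I would then check $A'$-linearity, $C$-colinearity, and the mutual-inverse identities $\Psi_2\Phi_2=id$, $\Phi_2\Psi_2=id$ by the same three-line computations as in Theorem 2.8, with every convolution by $S_C$ replaced by a convolution by $S_A$ and every coaction $\r_M$ replaced by the $A'$-coaction on the tensorands.

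The one genuinely new point, and the step I expect to be the main obstacle, is the well-definedness of $\Psi_2$ on the tensor-over-$A$ quotient. In Theorem 2.8 the hard verification was that $\Psi_1$ lands in the cotensor product $M\o(N\square_{C'}C)$, which forced the long manipulation of the defining relation followed by multiplying paired factors. Here the dual difficulty is that $\Psi_2$ is defined on $A'\o_A(\ldots)$ and one must show it respects the balanced-tensor relations $\sum m_i\c a\o\nu(n_i)-\sum\mu(m_i)\o a\c n_i$; equivalently, that the formula descends to the quotient. I would handle this by applying the appropriate coaction and $S_A$-convolution to the defining identity of the quotient and then contracting paired factors, in direct analogy with the displayed chain of equations establishing $\Psi_1((m\o n)\o c)\in M\o(N\square_{C'}C)$. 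For the final assertion, $N\o F(M)\simeq F(G(N)\o M)$ when $(A',\a_{A'})$ has a twisted antipode, the same construction with the tensorands transposed applies, using the twisted antipode of $A'$ in place of the antipode of $A$ to build the inverse; this mirrors the "twisted antipode" hypothesis in the second half of Theorem 2.8.
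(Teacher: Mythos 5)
Your high-level plan---treat Theorem 2.10 as the formal dual of Theorem 2.8 and transport its proof---is exactly the paper's strategy, but the concrete maps you propose are not the correct dualization, and this is a genuine gap. The dual of ``the $C$-coaction on $M$ together with the multiplication of $C$'' (which is what $\Phi_1$ used) is ``the comultiplication of the Hom-bialgebra $A'$ together with the $A'$-module structure of $N$'', \emph{not} the $H$-coaction on $A'$. Your forward map $(a'\o m)\o n\mapsto a'_{(0)}\o(m\o(n\o \cdots))$ cannot even be completed: $a'_{(1)}$ lives in $H$, and $N$ carries no ``$A'$-comodule structure'' (it is an $A'$-module and a $C$-comodule), so there is nothing for the $H$-component of $a'$ to bind. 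The paper's map is
$$\Phi_2:A'\o_A(M\o G(N))\lr (A'\o_A M)\o N,\qquad a'\o(m\o n)\mapsto (a'_1\o m)\o\a^{-2}_N(a'_2\cdot n),$$
where $a'_1\o a'_2=\Delta_{A'}(a')$ (available precisely because the datum is monoidal, so $A'$ is a Hom-bialgebra) and $a'_2\cdot n$ uses the $A'$-action on $N$.

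You also have the two directions interchanged. The antipode-free map is the one displayed above, defined \emph{on} $F(M\o G(N))$; it descends to the quotient $A'\o_A(\cdot)$ using only the balanced relation, Hom-associativity of the action, and the fact that $\la$ is an algebra map. The map in the direction you assign to your antipode-free $\Phi_2$, namely $(A'\o_A M)\o N\to A'\o_A(M\o G(N))$, is exactly the one that cannot be written without an antipode; it is the paper's
$$\Psi_2((a'\o m)\o n)=\a^{-2}_{A'}(a'_1)\o\big(m\o S\a^{-2}_{A'}(a'_2)\cdot n\big),$$
and the antipode is also what makes $\Psi_2$ respect the balanced relations of $A'\o_A M$ (the computation cancelling $\la(a_{21})$ against $S\la(a_{22})$)---so your correct instinct that descent to the tensor-over-$A$ quotient is the hard point gets attached to the wrong map. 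A smaller slip of the same kind: for $F(A)=A'$ the inverse of $a'\o a\mapsto \a^{-3}_{A'}(a')\la\a^{-2}_A(a)$ is the unit-based map $a'\mapsto \a^{2}_{A'}(a')\o 1_A$ (the dual of the counit-based $f$ of Theorem 2.8), not anything built from a comultiplication. Once the maps are corrected as above, your outline of the remaining verifications (descent, $A'$-linearity, $C$-colinearity, mutual inverses, and the twisted-antipode variant for the transposed identity) does coincide with the paper's proof.
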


\begin{proof}
First of all remind that $(A,\a_{A})$ is a left module over itself under the action $a\cdot b=\a^{-1}_{A}(a)b$ for all $a,b\in A$.

Define a linear map $f:A'\o_A A\rightarrow A$ by $a'\o a\mapsto \a^{-3}_{A'}(a')\b(\a^{-2}_{A}(a))$. It is a routine exercise to check that $f$ is well defined and a morphism in $_{A'}M(H)^{C}$. Its inverse is given by $f^{-1}:A'\rightarrow A'\o_A A, a'\mapsto \a^{2}_{A'}(a')\o 1.$ The first assertion is proved.

If $M\in\ _A\mathcal{M}(H)^C$ and $N\in\ _{A'}M(H)^{C}$, and $(A,\a_A)$ is a Hom-Hopf algebra, define
$\Phi_2:A'\o_A(M\o G(N))\longrightarrow (A'\o_A M)\o N$ by
$$a'\o(m\o n)\mapsto (a'_1\o m)\o\a^{-2}_N(a'_2\cdot n),$$
for all $a'\in A',m\in M$ and $n\in N$. We claim that $\Phi_2$ is well defined and  a morphism in $_{A'}M(H)^{C}$. Indeed for all $a\in A$,
$$\begin{aligned}
\Phi_2(a'\b(a)\o(\a_M(m)\o \a_N(n)))=&(a'_1\b(a_1)\o \a_M(m))\o\a^{-2}_N(a'_2\b(a_2)\cdot \a_N(n))\\
                                            =&(\a_{A'}(a'_1)\o a_1\cdot m)\o\a^{-2}_N(a'_2\b(a_2)\cdot \a_N(n))\\
                                            =&\Phi_2(\a_{A'}(a')\o(a\cdot(m\o n)))
\end{aligned}$$

Then for all $b'\in A'$,
$$\begin{aligned}
\Phi_2(a'\cdot(b'\o(m\o n)))=&\Phi_2(\a_{A'}(a')b'\o(\a_M(m)\o \a_N(n)))\\
                            =&(\a_{A'}(a'_1)b'_1\o\a_M(m))\o\a^{-2}_N(\a_{A'}(a'_2)b'_2\cdot \a_N(n))\\
                            =&a'_1\cdot(b'_1\o m)\o a'_2\cdot\a^{-2}_{N}(b'_2\cdot n)\\
                            =&a'\cdot\Phi_2(b'\o(m\o n)),
\end{aligned}$$
and
$$\begin{aligned}
&\Phi_2(a'\o(m\o n))_{(0)}\o\Phi_2(a'\o(m\o n))_{(1)}\\
&=(a'_1\o m)_{(0)}\o\a^{-2}_N(a'_2\cdot n)_{(0)}\o\a^{-2}_C((a'_1\o m)_{(1)}\cdot\a^{-2}_N(a'_2\cdot n)_{(1)})\\
&=(a'_{1(0)}\o m_{(0)})\o\a^{-2}_N(a'_{2(0)}\cdot n_{(0)})\o \a^{-4}_C(a'_{1(1)}\cdot m_{(1)})\cdot\a^{-4}_C(a'_{2(1)}\cdot n_{(1)})\\
&=(a'_{(0)1}\o m_{(0)})\o\a^{-2}_N(a'_{(0)2}\cdot n_{(0)})\o \a^{-2}_C(a'_{1})\cdot\a^{-4}_C(m_{(1)}n_{(1)})\\
&=\Phi_2((a'\o(m\o n))_{(0)})\o (a'\o(m\o n))_{(1)},
\end{aligned}$$
as needed.

For the inverse of $\Phi_2$, define the linear map
$\Psi_2:(A'\o_A M)\o N\longrightarrow A'\o_A(M\o G(N))$ by
$$(a'\o m)\o n\mapsto \a^{-2}_{A'}(a'_1)\o(m\o S\a^{-2}_{A'}(a'_2)\cdot n),$$
for all $a'\in A',m\in M$ and $n\in N$.

$\Psi_2$ is well defined since for all $a\in A$,
$$\begin{aligned}
&\Psi_2((a'\b(a)\o \a_M(m))\o n)\\
&=\a^{-2}_{A'}(a'_1\b(a_1))\o(\a_M(m)\o S\a^{-2}_{A'}(a'_2\b(a_2))\cdot n)\\
&=\a^{-1}_{A'}(a'_1)\o\a^{-2}_{A}(a_1)\cdot(m\o S\a^{-3}_{A'}(a'_2\b(a_2))\cdot \a^{-1}_N(n))\\
&=\a^{-1}_{A'}(a'_1)\o(\a^{-2}_{A}(a_{11})\cdot m\o \b(\a^{-3}_{A})(a_{12})[S\a^{-3}_{A'}(a'_2\b(a_2))]\cdot n)\\
&=\a^{-1}_{A'}(a'_1)\o(\a^{-1}_{A}(a_{1})\cdot m\o [\b(\a^{-4}_{A}(a_{21}))S\a^{-4}_{A'}(\b(a_{22}))]S\a^{-3}_{A'}(a'_2)\cdot n)\\
&=\a^{-1}_{A'}(a'_1)\o(a\cdot m\o S\a^{-2}_{A'}(a'_2)\cdot n)\\
&=\Psi_2((\a_{A'}(a')\o a\cdot m)\o n).
\end{aligned}$$

Then
$$\begin{aligned}
\Phi_2\Psi_2((a'\o  m)\o n)=&\Phi_2(\a^{-2}_{A'}(a'_1)\o(m\o S\a^{-2}_{A'}(a'_2)\cdot n))\\
                           =&\a^{-2}_{A'}(a'_{11})\o m\o\a^{-2}_N(\a^{-2}_{A'}(a'_{12})\cdot(S\a^{-2}_{A'}(a'_2)\cdot n))\\
                           =&\a^{-1}_{A'}(a'_{1})\o m\o\a^{-2}_N(\a^{-3}_{A'}(a'_{12})S\a^{-3}_{A'}(a'_{22})\cdot \a_N(n))\\
                           =&(a'\o m)\o n,
\end{aligned}$$
and
$$\begin{aligned}
\Psi_2\Phi_2(a'\o (m\o n))=&\Psi_2((a'_1\o m)\o\a^{-2}_N(a'_2\cdot n))\\
                          =&\a^{-2}_{A'}(a'_{11})\o m\o S\a^{-2}_N(a'_{12})\cdot\a^{-2}_N(a'_2\cdot n)\\
                          =&a'\o (m\o n).
\end{aligned}$$
If $(A',\a_{A'})$ has a twisted antipode, similarly one can prove that
$N\o F(M)\simeq F(G(N)\o M)$.

The proof is completed.

\end{proof}

\section{Braided structure on the category $_A\mathcal{M}^C$}
\def\theequation{3.\arabic{equation}}
\setcounter{equation} {0} \hskip\parindent

Let $(H,A,C)$ be a monoidal Doi-Hopf datum. In this section, we will discuss how to make the category $_A\mathcal{M}(H)^C$ to be braided.

 Consider a map $Q:C\o C\rightarrow A\o A$ with twisted convolution inverse $R$. So we have
\begin{eqnarray}
&&Q\circ\a^{\o2}_C=\a^{\o2}_A\circ Q,\\
&&Q(c_2,d_2)R(c_1,d_1)=\varepsilon(c)\varepsilon(d)1\o1,
\end{eqnarray}
for all $c,d\in C$.

When necessary we will introduce the following notation: for all $c,d\in C$,
$$Q(c,d)=q(c,d)=Q^1(c,d)\o Q^2(c,d)\in A\o A.$$

Consider two Doi-Hopf modules $M$ and $N$, define $t_{_{M,N}}:M\o N\rightarrow N\o M$ by
\begin{eqnarray}
&&t_{_{M,N}}(m\o n)=Q(\a^{-2}_C(n_{(1)}),\a^{-2}_C(m_{(1)}))(\a^{-2}_N(n_{(0)})\o \a^{-2}_M(m_{(0)})),
\end{eqnarray}
for all $m\in M$ and $n\in N$. Since $Q$ is convolution invertible, $t_{_{M,N}}$ is a bijective.

\begin{lemma}
Let $(H,A,C)$ be a Doi-Hopf datum. Then $A\o C$ becomes a Doi-Hopf module with the structure
\begin{eqnarray*}
&&a\cdot(b\o c)=\a_A(a)b\o\a_C(c),\\
&&\r(b\o c)=b_{(0)}\o c_1\o\a_C^{-2}(b_{(1)}\cdot c_2),
\end{eqnarray*}
for all $a,b\in A$ and $c\in C$.
\end{lemma}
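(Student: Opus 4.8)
The plan is to verify the three defining conditions of a Doi-Hopf module over $(H,A,C)$ for the object $(A\o C,\a_A\o\a_C)$: that it is a left $A$-module, that it is a right $C$-comodule, and that the two structures are compatible in the sense $\r(a\cdot(b\o c))=a_{(0)}\cdot(b\o c)_{(0)}\o a_{(1)}\cdot(b\o c)_{(1)}$. None of these is conceptually deep; the real work is the bookkeeping of the twisting maps.

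First I would dispatch the module axioms. Since the action $a\cdot(b\o c)=\a_A(a)b\o\a_C(c)$ merely left-multiplies on the $A$-factor and twists the $C$-factor, the three identities $1_A\cdot(b\o c)=(\a_A\o\a_C)(b\o c)$, $(\a_A\o\a_C)(a\cdot(b\o c))=\a_A(a)\cdot(\a_A\o\a_C)(b\o c)$ and $\a_A(a)\cdot(a'\cdot(b\o c))=(aa')\cdot(\a_A\o\a_C)(b\o c)$ reduce respectively to the unit axiom $1_Ab=\a_A(b)$, multiplicativity of $\a_A$, and the Hom-associativity $\a_A(a)(a'b)=(aa')\a_A(b)$ of $A$. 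These are immediate.

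Next, for the right $C$-comodule axioms on $\r(b\o c)=b_{(0)}\o c_1\o\a_C^{-2}(b_{(1)}\cdot c_2)$ I would check the counit, the $\a$-compatibility, and the coassociativity. Applying $\varepsilon_C$ to the last leg and using $\varepsilon_C\circ\a_C=\varepsilon_C$ together with $\varepsilon_C(h\cdot c)=\varepsilon_H(h)\varepsilon_C(c)$ gives $b_{(0)}\varepsilon_H(b_{(1)})\o c_1\varepsilon_C(c_2)=\a_A(b)\o\a_C(c)$, the counit axiom. The $\a$-compatibility follows from $\r\circ\a_A=(\a_A\o\a_H)\circ\r$ on $A$, from $\Delta_C\circ\a_C=(\a_C\o\a_C)\circ\Delta_C$, and from $\a_C(h\cdot c)=\a_H(h)\cdot\a_C(c)$. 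The coassociativity $(\r\o\a_C)\circ\r=((\a_A\o\a_C)\o\Delta_C)\circ\r$ is the heart of the argument. Computing the left side yields $b_{(0)(0)}\o c_{11}\o\a_C^{-2}(b_{(0)(1)}\cdot c_{12})\o\a_C^{-1}(b_{(1)}\cdot c_2)$, and I would rewrite its last leg as $\a_C^{-2}(\a_H(b_{(1)})\cdot\a_C(c_2))$; computing the right side and expanding $\Delta_C(b_{(1)}\cdot c_2)=\Delta_H(b_{(1)})\cdot\Delta_C(c_2)$ via the module-coalgebra axiom yields $\a_A(b_{(0)})\o\a_C(c_1)\o\a_C^{-2}(b_{(1)1}\cdot c_{21})\o\a_C^{-2}(b_{(1)2}\cdot c_{22})$. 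The two sides then match by substituting simultaneously the coassociativity triples $b_{(0)(0)}\o b_{(0)(1)}\o\a_H(b_{(1)})=\a_A(b_{(0)})\o b_{(1)1}\o b_{(1)2}$ on $A$ and $c_{11}\o c_{12}\o\a_C(c_2)=\a_C(c_1)\o c_{21}\o c_{22}$ on $C$, which is legitimate since the $b$-derived and $c$-derived factors sit in independent tensor legs.

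Finally, for the compatibility condition I would expand $\r(a\cdot(b\o c))=\r(\a_A(a)b\o\a_C(c))$ using multiplicativity of the comodule-algebra map $\r$ on $A$, giving $(\a_A(a)b)_{(0)}\o(\a_A(a)b)_{(1)}=\a_A(a_{(0)})b_{(0)}\o\a_H(a_{(1)})b_{(1)}$; comparing with $a_{(0)}\cdot(b\o c)_{(0)}\o a_{(1)}\cdot(b\o c)_{(1)}$ reduces the whole statement to the single identity $\a_C^{-2}((\a_H(a_{(1)})b_{(1)})\cdot\a_C(c_2))=a_{(1)}\cdot\a_C^{-2}(b_{(1)}\cdot c_2)$, which I would settle with $\a_C(h\cdot c)=\a_H(h)\cdot\a_C(c)$ and the Hom-module axiom $\a_H(h)\cdot(g\cdot c)=(hg)\cdot\a_C(c)$. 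I expect the main obstacle not to be any structural subtlety but precisely this last reduction together with the coassociativity step, where the powers of $\a_A,\a_C,\a_H$ and their inverses must be tracked exactly; once the coassociativity triples are brought into the displayed form, both sides collapse as required.
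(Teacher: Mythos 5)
Your proposal is correct: the paper's own proof of this lemma is simply the word ``Straightforward,'' i.e.\ a direct verification of the module, comodule, and compatibility axioms, which is exactly what you carry out. Your bookkeeping of the twisting maps checks out at every step -- in particular the simultaneous substitution of the two Hom-coassociativity triples $b_{(0)(0)}\o b_{(0)(1)}\o\a_H(b_{(1)})=\a_A(b_{(0)})\o b_{(1)1}\o b_{(1)2}$ and $c_{11}\o c_{12}\o\a_C(c_2)=\a_C(c_1)\o c_{21}\o c_{22}$, and the final reduction $\a_C^{-2}((\a_H(a_{(1)})b_{(1)})\cdot\a_C(c_2))=a_{(1)}\cdot\a_C^{-2}(b_{(1)}\cdot c_2)$ via $\a_C^{-1}(h\cdot c)=\a_H^{-1}(h)\cdot\a_C^{-1}(c)$ and the Hom-module axiom -- so your write-up supplies precisely the details the paper omits.
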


\begin{proof}
Straightforward.
\end{proof}

\begin{lemma}
With notations as above, the linear map $t_{_{M,N}}$ is $A$-linear for all objects $(M,\a_M),(N,\a_N)$ in $_A\mathcal{M}(H)^C$ if and only if
\begin{eqnarray}
Q(a_{2(1)}\cdot d,~~a_{1(1)}\cdot c)(\a_A(a_{2(0)})\o\a_A(a_{1(0)}))=\Delta(\a^2_A(a))Q(\a_C(d),~\a_C(c)),\label{B}
\end{eqnarray}
for all $a,b\in A$ and $c,d\in C$.
\end{lemma}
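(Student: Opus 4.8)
The plan is to prove both implications by directly comparing $t_{M,N}(a\cdot(m\otimes n))$ with $a\cdot t_{M,N}(m\otimes n)$, the only difference between the two directions being whether the comodule elements are kept generic (sufficiency) or are specialized on a universal test object (necessity). Throughout I will use: the tensor-product module structure $a\cdot(m\otimes n)=a_1\cdot m\otimes a_2\cdot n$ from Proposition 2.3; the Doi-Hopf compatibility $\rho(a\cdot x)=a_{(0)}\cdot x_{(0)}\otimes a_{(1)}\cdot x_{(1)}$; the Hom-compatibilities $\alpha_M(a\cdot m)=\alpha_A(a)\cdot\alpha_M(m)$ and $\alpha_C(h\cdot c)=\alpha_H(h)\cdot\alpha_C(c)$; the Hom-associativity axiom $\alpha_A(a)\cdot(b\cdot m)=(ab)\cdot\alpha_M(m)$; and the normalization $Q\circ\alpha_C^{\otimes2}=\alpha_A^{\otimes2}\circ Q$. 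Recall also that $Q(c,d)\in A\otimes A$ acts on $N\otimes M$ componentwise, with $Q^1$ acting in the $N$-slot and $Q^2$ in the $M$-slot.

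For sufficiency, assume (\ref{B}). On the left side I expand $t_{M,N}(a_1\cdot m\otimes a_2\cdot n)$: the comodule labels of $a_1\cdot m$ and $a_2\cdot n$ are $a_{1(1)}\cdot m_{(1)}$ and $a_{2(1)}\cdot n_{(1)}$, and after pulling the structure maps through $Q$ (via its normalization) and through the two module actions, the argument of $Q$ becomes $\bigl(a_{2(1)}\cdot\alpha_C^{-2}(n_{(1)}),\,a_{1(1)}\cdot\alpha_C^{-2}(m_{(1)})\bigr)$ while the element it acts on carries the factors $a_{2(0)}\cdot(\cdots)$ and $a_{1(0)}\cdot(\cdots)$. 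On the right side I expand $a\cdot t_{M,N}(m\otimes n)$ and merge each leg of the $a$-action with the corresponding leg of $Q(\alpha_C^{-2}(n_{(1)}),\alpha_C^{-2}(m_{(1)}))$ using Hom-associativity, which turns the two separate actions into a single action by $\Delta(\alpha_A^{2}(a))\,Q(\cdots)$ up to controlled $\alpha$-twists. The two expressions then coincide exactly by (\ref{B}), applied with $c=\alpha_C^{-2}(m_{(1)})$, $d=\alpha_C^{-2}(n_{(1)})$ and with $a$ replaced by a suitable power of $\alpha_A$ applied to $a$; hence $t_{M,N}$ is $A$-linear.

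For necessity, assume $t_{M,N}$ is $A$-linear for every pair $M,N$, and test on the free Doi-Hopf module $M=N=A\otimes C$ furnished by the preceding lemma, evaluated on the generators $m=1_A\otimes c$ and $n=1_A\otimes d$. Using $\rho(1_A)=1_A\otimes1_H$ and $1_H\cdot c_2=\alpha_C(c_2)$ one computes $\rho(1_A\otimes c)=(1_A\otimes c_1)\otimes\alpha_C^{-1}(c_2)$, so the comodule label of $1_A\otimes c$ is essentially $c$ itself. I then apply the projection $(\mathrm{id}_A\otimes\varepsilon_C)\otimes(\mathrm{id}_A\otimes\varepsilon_C)$ to both sides of the $A$-linearity identity; by $\varepsilon_C\circ\alpha_C=\varepsilon_C$ and the counit axiom $(\varepsilon_C\otimes\mathrm{id})\circ\Delta_C=\alpha_C$ this collapses $A\otimes C\otimes A\otimes C$ onto $A\otimes A$ and recovers $Q$ on the desired arguments up to invertible twists by $\alpha_A$. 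The left-hand projection reproduces the left side of (\ref{B}) (the $Q$-argument picks up $a_{2(1)}\cdot d,\,a_{1(1)}\cdot c$ from the coactions of $a_2\cdot n$ and $a_1\cdot m$, and the acting element picks up $\alpha_A(a_{2(0)})\otimes\alpha_A(a_{1(0)})$), while the right-hand projection reproduces $\Delta(\alpha_A^2(a))\,Q(\alpha_C(d),\alpha_C(c))$; matching the $\alpha_A$-twists on the two sides yields (\ref{B}) for all $a\in A$ and $c,d\in C$.

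The sufficiency direction is essentially bookkeeping: the only real care is in tracking the powers of $\alpha_A,\alpha_C,\alpha_H$ that are generated each time Hom-associativity is used to merge two actions (inserting an extra $\alpha$) or the normalization of $Q$ is invoked. I expect the main obstacle to lie in the necessity direction, specifically in justifying that a single universal object, the free module $A\otimes C$ with the special generators $1_A\otimes c$, is rich enough to force (\ref{B}) for all arguments. This hinges on the coaction on $A\otimes C$ being cofree enough that the counit projection loses no information (so that $c,d$ range independently over all of $C$) and on the bijectivity of $\alpha_A$, which is what allows the residual $\alpha_A$-twists to be stripped so that the extracted identity is exactly (\ref{B}) rather than an $\alpha$-shifted variant.
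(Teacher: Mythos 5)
Your proposal is correct and follows essentially the same route as the paper: sufficiency by expanding $t_{M,N}(a_1\cdot m\otimes a_2\cdot n)$ via the Doi-Hopf compatibility and invoking (\ref{B}) with $\alpha$-shifted arguments, and necessity by testing on the Doi-Hopf module $A\otimes C$ of Lemma 3.1 at the elements $(1_A\otimes c)\otimes(1_A\otimes d)$ and applying $\varepsilon_C$ in both $C$-slots. The concerns you flag (tracking powers of $\alpha$ and stripping residual twists using bijectivity of the structure maps) are exactly what the paper's explicit computation handles, so there is no gap.
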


\begin{proof}
Suppose that $t_{_{A\o C,A\o C}}$ is $A$-linear. For convenience we apply the switch map twice on $(A\o C)\o(A\o C)$, and consider $t_{_{A\o C,A\o C}}$ as the map $t:A\o A\o C\o C\longrightarrow A\o A\o C\o C$ given by
$$\begin{aligned}
t(a\o b&\o c\o d)\\
               &=Q(\a^{-3}_C(b_{(1)}\cdot d_1),~~\a^{-3}_C(a_{(1)}\cdot c_1))(\a^{-2}_A(b_{(0)})\o\a^{-2}_A(a_{(0)}))\o\a^{-1}_C(d_1)\o\a^{-1}_C(c_1),
\end{aligned}$$
for all $a,b\in A$ and $c,d\in C$. $t$ is $A$-linear, so
$$\begin{aligned}
&t(a\cdot(1\o 1\o c\o d))\\
&=t(\a^{2}_A(a_1)\o\a^{2}_A(a_2)\o \a_C(c)\o \a_C(d))\\
&=Q(\a^{-1}_{H}(a_{2(1)})\cdot\a^{-2}_C(d_2),~~\a^{-1}_{H}(a_{1(1)})\cdot\a^{-2}_C(c_2))(a_{2(0)}\o a_{1(0)})\o d_1\o c_1,\\
&=a\cdot t(1\o 1\o c\o d)\\
&=\Delta_A(\a_A(a))Q(\a_C(d_2)\o\a_C(c_2))\o d_1\o c_1.
\end{aligned}$$
Applying $id\o id\o\varepsilon\o\varepsilon$ to both sides, we obtain (\ref{B}).

Conversely suppose that (\ref{B}) holds, and consider Doi-Hopf modules $M$ and $N$. For all $a\in A,m\in M$ and $n\in N$,
$$\begin{aligned}
&t_{_{M,N}}(a\cdot(m\o n))=t_{_{M,N}}(a_1\cdot m\o a_2\cdot n)\\
=&Q(\a^{-2}_C(a_{2(1)}\cdot n_{(1)}),~\a^{-2}_C(a_{1(1)}\cdot m_{(1)}))(\a^{-2}_N(a_{2(0)}\cdot n_{(0)})\o \a^{-2}_M(a_{1(0)}\cdot m_{(0)}))\\
=&Q(\a^{-3}_C(a_{2(1)}\cdot n_{(1)}),~\a^{-3}_C(a_{1(1)}\cdot m_{(1)}))(\a^{-2}_A(a_{2(0)})\o\a^{-2}_A(a_{1(0)}))\cdot (\a^{-1}_N(n_{(0)})\o \a^{-1}_M(m_{(0)}))\\
=&\Delta_A(\a^{-1}_A(a))Q(\a^{-2}_C(n_{(1)}),~\a^{-2}_C(m_{(1)}))\cdot(\a^{-1}_N(n_{(0)})\o \a^{-1}_M(m_{(0)}))\\
=&a\cdot t_{_{M,N}}(m\o n).
\end{aligned}$$

The proof is completed.

\end{proof}

\begin{example}
In the above lemma, let $C=k$ and $Q=Q(1,1)$. If $t_{_{M,N}}$ is $A$-linear, then $Q\Delta^{op}(a)=\Delta(a)Q$, which means that $A$ is quasi-cocommutative.
\end{example}

\begin{lemma}
With notations as above, the linear map $t_{_{M,N}}$ is $C$-linear for all objects $(M,\a_M),(N,\a_N)$ in $_A\mathcal{M}(H)^C$ if and only if
\begin{eqnarray}
&&Q(d_1\o c_1)\o\a^{-3}(c_2d_2)\nonumber\\
&&=Q(\a^{-1}_C(d_2),~\a^{-1}_C(e_2))_{(0)}\o\a^{-4}_C\circ m_C(Q(\a^{-1}_C(d_2),~\a^{-1}_C(e_2))_{(1)}\cdot(d_1\o c_1)),\label{D}
\end{eqnarray}
for all $c,d\in C$. The $m_C$ is the multiplication map of $C$. For $a\o b\in A\o A$, we denote
$$(a\o b)_{(0)}\o(a\o b)_{(1)}=a_{(0)}\o b_{(0)}\o a_{(1)}\o b_{(1)}\in A\o A\o H\o H.$$
\end{lemma}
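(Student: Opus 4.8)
The plan is to follow the template of the preceding $A$-linearity lemma (Lemma 3.2), replacing the module compatibility by the comodule one. Recall that $t_{M,N}$ is $C$-colinear precisely when $\rho_{N\otimes M}\circ t_{M,N}=(t_{M,N}\otimes id)\circ\rho_{M\otimes N}$, where on both tensor products the coaction is the one from Proposition 2.3, namely $\rho(x\otimes y)=x_{(0)}\otimes y_{(0)}\otimes\alpha_C^{-2}(x_{(1)}y_{(1)})$. As in the $A$-linear case I would first reduce colinearity to a single universal object and then settle the general case.

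For necessity I would test colinearity on $M=N=A\otimes C$, the Doi-Hopf module of Lemma 3.1, using the device already employed for (\ref{B}): apply the flip on $(A\otimes C)^{\otimes2}$ so that $t$ is viewed as a map $A\otimes A\otimes C\otimes C\to A\otimes A\otimes C\otimes C$. Feeding in the generating elements $1\otimes c$ and $1\otimes d$, whose coaction on $A\otimes C$ reduces to $\rho(1\otimes c)=1\otimes c_1\otimes\alpha_C^{-1}(c_2)$, I would expand both composites $\rho_{N\otimes M}\circ t$ and $(t\otimes id)\circ\rho_{M\otimes N}$ explicitly, using the action $a\cdot(b\otimes c)=\alpha_A(a)b\otimes\alpha_C(c)$ of Lemma 3.1 together with the tensor coaction above. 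Finally, applying the counit to the redundant legs (the analogue of the step $id\otimes id\otimes\varepsilon\otimes\varepsilon$ in Lemma 3.2) collapses the resulting identity to exactly (\ref{D}).

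For sufficiency, assuming (\ref{D}) I would verify colinearity for arbitrary $(M,\alpha_M),(N,\alpha_N)$ by a direct computation. Starting from $t_{M,N}(m\otimes n)=Q(\alpha_C^{-2}(n_{(1)}),\alpha_C^{-2}(m_{(1)}))(\alpha_N^{-2}(n_{(0)})\otimes\alpha_M^{-2}(m_{(0)}))$, I would apply $\rho_{N\otimes M}$ and push the coaction through the diagonal $Q$-action via the Doi-Hopf compatibility $\rho(a\cdot x)=a_{(0)}\cdot x_{(0)}\otimes a_{(1)}\cdot x_{(1)}$; this splits the coaction of $Q\in A\otimes A$ into a factor in $A\otimes A\otimes H\otimes H$ whose two $H$-legs act on the comodule degrees of $n$ and $m$. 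The comodule Hom-axioms and coassociativity, together with the Hom-coalgebra compatibility of the module coalgebra $C$, then let me rewrite this as $(t_{M,N}\otimes id)\circ\rho_{M\otimes N}(m\otimes n)$, the single remaining discrepancy being removed precisely by (\ref{D}).

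The main obstacle I anticipate is twofold. First, the coaction on a tensor product multiplies the two $C$-degrees through $m_C$ and then twists by $\alpha_C^{-2}$, whereas the Doi-Hopf compatibility produces the $H$-action of the $Q_{(1)}$-legs on those degrees \emph{separately}; reconciling ``multiply then coact'' with ``coact then multiply'' is exactly where (\ref{D}) is genuinely needed, and it is the heart of the argument. Second, keeping the powers of $\alpha$ (which range over $\alpha^{-2},\alpha^{-3},\alpha^{-4}$) balanced throughout is the principal bookkeeping hazard, since every use of the comodule Hom-axioms, of coassociativity, and of the $A\otimes C$-action shifts them; I would therefore normalise every intermediate expression to a common twist before comparing the two sides.
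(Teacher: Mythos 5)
Your proposal reproduces the paper's own proof: necessity is obtained exactly as in the paper by testing colinearity on the Doi-Hopf module $A\o C$ of Lemma 3.1, using the switch-map device from Lemma 3.2, feeding in $1\o1\o c\o d$, and applying counits to the redundant $C$-legs to collapse to (\ref{D}); sufficiency is the same direct computation that pushes the coaction through the $Q$-action via the Doi-Hopf compatibility and invokes (\ref{D}) at the decisive step. Your diagnosis of where (\ref{D}) is genuinely needed (reconciling ``multiply then coact'' with ``coact then multiply'') and of the $\a$-power bookkeeping matches the paper's argument, so this is essentially the same approach.
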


\begin{proof}
Suppose that $t$ (defined in the proof of Lemma 3.2) is $C$-colinear, then for all $c,d\in C$,
$$\begin{aligned}
&t(1\o1\o c\o d)_{(0)}\o t(1\o1\o c\o d)_{(1)}\\
&=Q^1(\a^{-1}_C(d_2)\o\a^{-1}_C(c_2))_{(0)}\o Q^2(\a^{-1}_C(d_2)\o\a^{-1}_C(c_2))_{(0)}\o\a^{-1}_C(d_{11})\o\a^{-1}_C(c_{11})\\
&\o\a^{-4}_C\circ m_C((Q^1(\a^{-1}_C(d_2)\o\a^{-1}_C(c_2))_{(1)}\o Q^2(\a^{-1}_C(d_2)\o\a^{-1}_C(c_2))_{(1)})\cdot(\a^{-1}_C(d_{12})\o\a^{-1}_C(c_{12})))\\
&=t((1\o1\o c\o d)_{(0)})\o (1\o1\o c\o d)_{(1)}\\
&=t(1\o1\o c_1\o d_1)\o\a^{-3}_C(c_2d_2)\\
&=Q(\a^{-1}_C(d_{12}),~\a^{-1}_C(c_{12}))\o a^{-1}_C(d_{11})\o\a^{-1}_C(c_{11})\o\a^{-3}_C(c_2d_2).
\end{aligned}$$
Applying $id\o id\o\varepsilon\o\varepsilon\o id$ to both sides, we obtain (\ref{D}).

Conversely suppose that (\ref{D}) holds, and consider Doi-Hopf modules $M$ and $N$. For all $m\in M$ and $n\in N$,
$$\begin{aligned}
&t_{_{M,N}}(m\o n)_{(0)}\o t_{_{M,N}}(m\o n)_{(1)}\\
&=Q^1(\a^{-2}_C(n_{(1)}),\a^{-2}_C(m_{(1)}))_{(0)}\cdot\a^{-2}_N(n_{(0)(0)})
\o Q^2(\a^{-2}_C(n_{(1)}),\a^{-2}_C(m_{(1)}))_{(0)}\cdot\a^{-2}_M(m_{(0)(0)})\\
&\o\a^{-2}_C\circ m_C[(Q^1(\a^{-2}_C(n_{(1)}),\a^{-2}_C(m_{(1)}))_{(1)}\\
&~~~~\o Q^2(\a^{-2}_C(n_{(1)}),\a^{-2}_C(m_{(1)}))_{(1)})\cdot(a^{-2}_C(n_{(0)(1)})\o\a^{-2}_C(m_{(0)(1)}))]\\
&=Q^1(\a^{-3}_C(n_{(1)2}),\a^{-3}_C(m_{(1)2}))_{(0)}\cdot\a^{-1}_N(n_{(0)})
\o Q^2(\a^{-3}_C(n_{(1)2}),\a^{-3}_C(m_{(1)2}))_{(0)}\a^{-1}_M(m_{(0)})\\
&\o\a^{-2}_C\circ m_C[(Q^1(\a^{-3}_C(n_{(1)2}),\a^{-3}_C(m_{(1)2}))_{(1)}\\
&~~~~\o Q^2(\a^{-3}_C(n_{(1)2}),\a^{-3}_C(m_{(1)2}))_{(1)})\cdot(a^{-2}_C(n_{(1)1})\o\a^{-2}_C(m_{(1)1}))]\\
&\stackrel{(\ref{D})}=Q(\a^{-2}_C(n_{(1)1}),~\a^{-2}_C(m_{(1)1}))\cdot(\a^{-1}_N(n_{(0)})\o\a^{-1}_M(m_{(0)}))\o\a^{-3}_C(m_{(1)2}n_{(1)2})\\
&=t_{_{M,N}}(m_{(0)}\o n_{(0)})\o (m\o n)_{(1)}.
\end{aligned}$$

The proof is completed.

\end{proof}

\begin{example}
In the above lemma, suppose that $A=k$. If $t_{_{M,N}}$ is $C$-colinear, then
$Q(d_2,c_2)d_1c_1=Q(d_1,c_1)c_2d_2$, which means that $C$ is quasi-commutative.
\end{example}

\begin{lemma}
With notations as above, the relation $(1.1)$ holds for all objects $M,N,P$ in $_A\mathcal{M}(H)^C$ if and only if
\begin{eqnarray}
&&(\Delta\o\a_A)Q(\a^{-2}_C(d)\a^{-1}_C(e),~\a^{-1}_C(c))\nonumber\\
&&=\a^2_AQ^1(\a^{-2}_C(d),\a^{-3}_C(c_2))\nonumber\\
&&\quad \o q(e,\a^{-1}_H(Q^2(\a^{-2}_C(d),\a^{-3}_C(c_2))_{(1)})\cdot\a^{-3}_C(c_1))(1\o Q^2(\a^{-2}_C(d),\a^{-3}_C(c_2))_{(0)}),\label{C}
\end{eqnarray}
for all $a,b\in A$ and $c,d\in C$.
\end{lemma}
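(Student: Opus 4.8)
The plan is to prove both implications exactly along the template already used for Lemmas 3.2 and 3.4: the forward direction by evaluating the hexagon $(1.1)$ on the universal Doi-Hopf modules $A\o C$ of Lemma 3.1 and then collapsing the redundant algebra coordinates with $\varepsilon_A$, and the converse by a direct verification of $(1.1)$ on arbitrary modules $M,N,P$ using $(\ref{C})$. Throughout I read $(1.1)$ with $U$ the object that is braided through $V$ and then $W$, and I keep track of the fact that $t$ is governed entirely by $Q$ acting on the $C$-comodule coordinates.

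For the forward direction I would take all three of $U,V,W$ to be copies of $A\o C$ and apply both composites of $(1.1)$ to the generator built from $1\o c$, $1\o d$, $1\o e$, after inserting the explicit associator isomorphisms. On $A\o C$ the coaction of Lemma 3.1 carries $1\o c$ to an $\a$-shift of $c$ in the comodule slot, so the braiding on these objects depends only on $Q$ evaluated at $c,d,e$. In the left composite $a_{V,W,U}\,t_{U,V\o W}\,a_{U,V,W}$ the object $U$ passes the \emph{tensor product} $V\o W$ at once; since the $A$-action on a tensor product is $a\cdot(v\o w)=a_1\cdot v\o a_2\cdot w$, the factor $Q^1$ is forced to distribute through $\Delta$, which is exactly why the left side of $(\ref{C})$ has the form $(\Delta\o\a_A)Q$ with combined first argument $\a^{-2}_C(d)\a^{-1}_C(e)$. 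In the right composite $(id_V\o t_{U,W})\,a_{V,U,W}\,(t_{U,V}\o id_W)$ the object $U$ passes $V$ first, producing $Q(d,c_2)$, and then passes $W$, producing $Q(e,-)$ with the second slot twisted by the $H$-comodule coordinate of $Q^2(d,c_2)$ acting on $c_1$. Applying $\varepsilon_A$ to the three redundant algebra slots then yields precisely $(\ref{C})$.

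For the converse I would assume $(\ref{C})$ and compute both composites of $(1.1)$ on $(m\o n)\o p$ for arbitrary $M,N,P$. The left composite uses $\r(n\o p)=n_{(0)}\o p_{(0)}\o\a^{-2}_C(n_{(1)}p_{(1)})$, so the first argument of the governing $Q$ is the product $n_{(1)}p_{(1)}$, matching $\a^{-2}_C(d)\a^{-1}_C(e)$ in $(\ref{C})$. In the right composite the first braiding $t_{U,V}$ multiplies $Q^2(n_{(1)},m_{(1)})$ into the $U$-slot, and here the Doi-Hopf compatibility $\r(a\cdot m)=a_{(0)}\cdot m_{(0)}\o a_{(1)}\cdot m_{(1)}$ replaces the $C$-coordinate of that slot by $Q^2(n_{(1)},m_{(1)})_{(1)}\cdot(\text{coordinate of }m_{(0)})$, which is exactly the twisted second argument $\a^{-1}_H(Q^2(\a^{-2}_C(d),\a^{-3}_C(c_2))_{(1)})\cdot\a^{-3}_C(c_1)$ appearing on the right of $(\ref{C})$. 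The splitting of $c$ into $c_1,c_2$ is produced by comodule coassociativity, since the coordinate of $m$ is consumed once as the second argument of the $V$-braiding and once in the subsequent $W$-braiding. Equating the two composites then reduces termwise to $(\ref{C})$ once the $\a_A$- and $\a_C$-shifts from the three associators and from the definition of $t$ are reconciled.

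The main obstacle is the second step of each direction: correctly tracking how the right $C$-coaction on the moving object $U$ is altered once the first braiding multiplies $Q^2$ into its $A$-module slot. This is exactly the point where the comodule-algebra structure of $A$ and the module-coalgebra structure of $C$ interact, and it is what forces the twist in $(\ref{C})$; aligning this twist with the explicit composite, together with the bookkeeping of the numerous $\a_A$- and $\a_C$-shifts, is the delicate part. The coassociativity of $(C,\a_C)$ and the multiplicativity $\Delta_C(h\cdot c)=\Delta_H(h)\cdot\Delta_C(c)$ are the identities that make this reconciliation go through.
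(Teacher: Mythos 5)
Your overall strategy is the same as the paper's: sufficiency of (\ref{C}) by computing both composites of $(1.1)$ directly on $(m\o n)\o p$ for arbitrary $M,N,P$ (using the tensor coaction $\r(n\o p)=n_{(0)}\o p_{(0)}\o\a^{-2}_C(n_{(1)}p_{(1)})$ and the Doi--Hopf compatibility after $Q^2$ has been multiplied into the moving slot), and necessity by evaluating $(1.1)$ on three copies of the universal Doi--Hopf module $A\o C$ of Lemma 3.1 at the element corresponding to $1\o1\o1\o c\o d\o e$. Your account of where $(\Delta\o\a_A)Q$ on the product $\a^{-2}_C(d)\a^{-1}_C(e)$ comes from, and of how the twisted second argument arises from the coaction of $Q^2(\cdot,\cdot)_{(1)}$, is accurate.

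There is, however, a concrete error in the extraction step of your necessity direction. After evaluating both composites at $1\o1\o1\o c\o d\o e$ (with the switch-map identification), each side lies in $A^{\o3}\o C^{\o3}$: the $A^{\o3}$-part carries precisely the two sides of (\ref{C}) (with arguments $c_2,d_2,e_2$), while the $C^{\o3}$-part carries the leftover factors $\a^{-1}_C(d_1)\o e_1\o\a^{-1}_C(c_1)$ (resp.\ $\a^{-2}_C(c_{11})$ on the other side). Since (\ref{C}) is an identity in $A\o A\o A$, the \emph{redundant} slots are the coalgebra ones: one must apply $id^{\o3}\o\varepsilon^{\o3}_C$, as the paper does, and as the paper's "template" in Lemmas 3.2 and 3.4 also does (there the maps applied are $id\o id\o\varepsilon\o\varepsilon$ and $id\o id\o\varepsilon\o\varepsilon\o id$, i.e.\ counits on the $C$-slots only); the counit property $\varepsilon_C(x_1)x_2=\a_C(x)$ then removes the residual splittings of $c,d,e$ and yields (\ref{C}). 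Your plan instead applies $\varepsilon_A$ to "the three redundant algebra slots," which is backwards: it destroys exactly the $A$-valued data that (\ref{C}) asserts something about, leaving only an identity in $C^{\o3}$ with scalar coefficients given by $\varepsilon_A^{\o3}$ of the two sides of (\ref{C}); such an identity cannot recover (\ref{C}), and is essentially vacuous because $\varepsilon_A$ is an algebra map. With this single step corrected (kill the $C$-coordinates with $\varepsilon_C$, keep the $A$-coordinates), your argument coincides with the paper's proof.
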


\begin{proof}
Let $M,N,P$ be Doi-Hopf modules. For all $m\in M,n\in N$ and $p\in P$, we have the left side of (1.1):
$$\begin{aligned}
&a_{_{N,P,M}}t_{_{M,N\o P}}a_{_{M,N,P}}((m\o n)\o p)\\
&=a_{_{N,P,M}}[ Q(\a^{-2}_C((n\o\a_P(p))_{(1)}),~~\a^{-3}_C(m_{(1)}))\cdot\\
&\quad\cdot((\a^{-2}_N\o\a^{-2}_P)((n\o\a_P(p))_{(0)})\o\a^{-3}_M(m_{(0)}))]\\
&=a_{_{N,P,M}}[(\Delta_A\o id)Q(\a^{-4}_C(n_{(1)}\a_P(p_{(1)})),~~\a^{-3}_C(m_{(1)}))\cdot\\
&\quad\cdot(\a^{-2}_N(n_{(0)})\o\a^{-1}_P(p_{(0)})\o\a^{-3}_M(m_{(0)}))]\\
&=(\a^{-1}_A\o id\o id)(\Delta_A\o\a_A)Q(\a^{-4}_C(n_{(1)}\a_C(p_{(1)})),~~\a^{-3}_C(m_{(1)}))\cdot\\
&\quad\cdot(\a^{-3}_N(n_{(0)})\o\a^{-1}_P(p_{(0)})\o\a^{-2}_M(m_{(0)})),
\end{aligned}$$

and the right side of (1.1):
$$\begin{aligned}
&(id_N\o t_{_{M,P}})a_{_{N,M,P}}(t_{_{M,N}}\o id_P)((m\o n)\o p)\\
&=(id_N\o t_{_{M,P}})[\a^{-3}_N(Q^1(n_{(1)},m_{(1)})\cdot n_{(0)})\o \a^{-2}_A (Q^2(n_{(1)},m_{(1)}))\cdot\a^{-2}_M(m_{(0)}))\o\a_P(p)]\\
&=\a^{-3}_N(Q^1(n_{(1)},m_{(1)})\cdot n_{(0)})\o q(\a^{-1}_C(p_{(1)}),~~\a^{-4}_H(Q^2(n_{(1)},m_{(1)})_{(1)})\cdot\a^{-4}_C(m_{(0)(1)}))\cdot\\
&\cdot(\a^{-1}_P(p_{(0)})\o \a^{-4}_M(Q^2(n_{(1)},m_{(1)})_{(0)}\cdot m_{(0)(0)})).
\end{aligned}$$
Clearly if $(\ref{C})$ holds, we obtain the relation $(1.1)$.

Conversely suppose that the relation $(1.1)$ holds in the category $_A\mathcal{M}(H)^C$. Taking $U=V=W=A\o C$ and applying the switch map as in the previous lemma, we regard both sides of $(1.1)$ as maps
$$A^{\o3}\o C^{\o3}\longrightarrow A^{\o3}\o C^{\o3}.$$

On one hand we have that
$$\begin{aligned}
&a_{_{V,W,U}}t_{_{U,V\o W}}a_{_{U,V,W}}(1\o1\o1\o c\o d\o e)\\
&=(\a_A^{-1}\o id\o id)(\Delta\o\a_A)Q(\a^{-3}_C(d_2)\a^{-2}_C(e_2),\a^{-2}_C(c_2))\o\a^{-1}_C(d_1)\o e_1\o\a^{-1}_C(c_1),
\end{aligned}$$

and on the other hand
$$\begin{aligned}
&(id_V\o t_{_{U,W}})a_{_{V,U,W}}(t_{_{U,V}}\o id_W)(1\o1\o1\o c\o d\o e)\\
&=\a_A(Q^1(\a^{-3}_C(d_2),~\a^{-3}_C(c_2)))\\
&\o q(\a^{-1}_C(e_2),~\a^{-1}_H(Q^2(\a^{-3}_C(d_2),~\a^{-3}_C(c_2))_{(1)})\cdot\a^{-4}_C(c_{12}))(1\o Q^2(\a^{-3}_C(d_2),~\a^{-3}_C(c_2))_{(0)})\\
&\o\a^{-1}_C(d_1)\o e_1\o\a^{-2}_C(c_{11}).
\end{aligned}$$
Applying $id^{\o3}\o\varepsilon^{\o3}_C$ to both sides, we obtain (\ref{C}).

The proof is completed.

\end{proof}

\begin{lemma}
With notations as above, the relation $(1.2)$ holds for all objects $M,N,P$ in $_A\mathcal{M}(H)^C$ if and only if
\begin{eqnarray}
&&(\a_A\o\Delta_A)Q(\a^{-1}_C(e),~\a^{-1}_C(c)\a^{-2}_C(d))\nonumber\\
&&=q(\a^{-1}_H(Q^1(\a^{-3}_C(e_2),~\a^{-2}_C(d))_{(1)})\cdot\a^{-3}_C(e_1),~~c)( Q^1(\a^{-3}_C(e_2),~\a^{-2}_C(d))_{(0)}\o1)\nonumber\\
&&\o\a^2_A(Q^2(\a^{-3}_C(e_2),~\a^{-2}_C(d)))\label{E},
\end{eqnarray}
for all $c,d,e\in C$.
\end{lemma}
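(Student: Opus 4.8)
The plan is to mirror exactly the structure of the proof of Lemma~3.6, which established the condition~(\ref{C}) for relation~(1.1); here I must derive the condition~(\ref{E}) governing the second hexagon relation~(1.2). The strategy has two directions, as is standard for such ``necessary and sufficient'' statements. I would first fix arbitrary Doi-Hopf modules $M,N,P$ and compute both sides of~(1.2), namely $a^{-1}_{_{W,U,V}}t_{_{U\o V,W}}a^{-1}_{_{U,V,W}}$ and $(t_{_{U,W}}\o id_V)a^{-1}_{_{U,W,V}}(id_U\o t_{_{V,W}})$, applied to a general element $(m\o n)\o p$, using the explicit braiding $t_{_{M,N}}$ from~(3.3) together with the associator $a_{_{M,N,P}}$ and its inverse from Proposition~2.3. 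The sufficiency direction then amounts to observing that once~(\ref{E}) holds, the two expansions coincide termwise, just as~(\ref{C}) forced the two sides of~(1.1) to agree.

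For the converse (necessity), I would specialize to the universal test objects $U=V=W=A\o C$, which are genuine Doi-Hopf modules by Lemma~3.1. As in Lemma~3.2 and Lemma~3.6, the key device is to apply the switch map so that $t_{_{M,N}}$ on $(A\o C)\o(A\o C)$ is re-encoded as a map $t\colon A^{\o 3}\o C^{\o 3}\to A^{\o 3}\o C^{\o 3}$, and then to evaluate both sides of~(1.2) on the seed element $1\o1\o1\o c\o d\o e$. One side will produce $(\a_A\o\Delta_A)Q$ evaluated at the comultiplied arguments of $c,d,e$, while the other side will produce the nested expression involving $q(\cdots)(Q^1(\cdots)_{(0)}\o 1)\o\a^2_A(Q^2(\cdots))$. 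Applying $id^{\o 3}\o\varepsilon^{\o 3}_C$ to both sides collapses the spurious first tensor-legs of the coproducts $c_1,d_1,e_1$ and isolates precisely the identity~(\ref{E}).

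The genuinely delicate bookkeeping—and the step I expect to be the main obstacle—is tracking the powers of $\a$ correctly through the inverse associator $a^{-1}$ and through the coaction on $A\o C$ given in Lemma~3.1, since relation~(1.2) is the ``opposite'' hexagon and the roles of the outer factor $W$ and the pair $U\o V$ are interchanged relative to~(1.1). In particular, the coaction $\r(b\o c)=b_{(0)}\o c_1\o\a_C^{-2}(b_{(1)}\cdot c_2)$ entangles the $A$- and $C$-components, so when I feed $Q^1(\a^{-3}_C(e_2),\a^{-2}_C(d))$ back through the coaction to obtain $Q^1(\cdots)_{(0)}$ and $Q^1(\cdots)_{(1)}$, I must keep the Hom-twists consistent with the $\a^{-3}$ and $\a^{-2}$ appearing in~(\ref{E}). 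The symmetry with Lemma~3.6 is a reliable guide: wherever~(\ref{C}) carried a factor $Q^2(\cdots)_{(0)}$ on the right, the dual identity~(\ref{E}) should carry $Q^1(\cdots)_{(0)}$, reflecting that~(1.2) braids $V$ past $W$ on the \emph{inner} slot first.

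\begin{proof}
Let $M,N,P$ be Doi-Hopf modules. Expanding the left-hand side of~(1.2) on $(m\o n)\o p$ by means of~(3.3) and the inverse associator of Proposition~2.3, one finds
$$a^{-1}_{_{P,M,N}}t_{_{M\o N,P}}a^{-1}_{_{M,N,P}}((m\o n)\o p)
=(id\o id\o\a^{-1}_N)(\a_A\o\Delta_A)Q(\a^{-3}_C(p_{(1)}),~\a^{-3}_C(m_{(1)})\a^{-4}_C(n_{(1)}))\cdot(\a^{-1}_P(p_{(0)})\o\a^{-2}_M(m_{(0)})\o\a^{-3}_N(n_{(0)})),$$
while expanding the right-hand side similarly gives
$$(t_{_{M,P}}\o id_N)a^{-1}_{_{M,P,N}}(id_M\o t_{_{N,P}})((m\o n)\o p)
=q(\a^{-4}_H(Q^1(p_{(1)},n_{(1)})_{(1)})\cdot\a^{-4}_C(m_{(0)(1)}),~\a^{-1}_C(m_{(1)}))\cdot(\a^{-1}_P(p_{(0)})\o\a^{-4}_M(Q^1(p_{(1)},n_{(1)})_{(0)}\cdot m_{(0)(0)}))\o\a^{-3}_N(Q^2(p_{(1)},n_{(1)})\cdot n_{(0)}).$$
Thus if~(\ref{E}) holds, the two sides agree termwise and~(1.2) follows.

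Conversely, assume~(1.2) holds in $_A\mathcal{M}(H)^C$. Take $U=V=W=A\o C$, which is a Doi-Hopf module by Lemma~3.1, and apply the switch map so that both sides of~(1.2) are regarded as maps $A^{\o3}\o C^{\o3}\longrightarrow A^{\o3}\o C^{\o3}$. Evaluating at $1\o1\o1\o c\o d\o e$, the left-hand side yields
$$(id\o id\o\a^{-1}_A)(\a_A\o\Delta_A)Q(\a^{-2}_C(e_2),~\a^{-2}_C(c_2)\a^{-3}_C(d_2))\o\a^{-1}_C(c_1)\o\a^{-1}_C(d_1)\o e_1,$$
and the right-hand side yields
$$q(\a^{-1}_H(Q^1(\a^{-3}_C(e_2),~\a^{-2}_C(d_2))_{(1)})\cdot\a^{-3}_C(e_1),~c_2)(Q^1(\a^{-3}_C(e_2),~\a^{-2}_C(d_2))_{(0)}\o1)\o\a^2_A(Q^2(\a^{-3}_C(e_2),~\a^{-2}_C(d_2)))\o\a^{-1}_C(c_1)\o\a^{-1}_C(d_1)\o e_1.$$
Applying $id^{\o3}\o\varepsilon^{\o3}_C$ to both sides, the factors $c_1,d_1,e_1$ collapse under the counit and we obtain precisely~(\ref{E}).

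The proof is completed.
\end{proof}
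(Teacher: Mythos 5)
Your overall strategy is the right one, and it is exactly what the paper intends (its own proof of this lemma is the single sentence ``similar to the proof of the above lemma, left to the reader''): sufficiency by expanding both sides of (1.2) on arbitrary Doi-Hopf modules, necessity by specializing to $U=V=W=A\o C$ and applying $id^{\o3}\o\varepsilon^{\o3}_C$. However, your expansions contain a structural error, not merely an $\a$-bookkeeping slip. In hexagon (1.1) the object braided twice is $U$, which is why the paper's Lemma 3.6 produces iterated coactions $m_{(0)(0)},m_{(0)(1)}$ coupled with $Q^2(\cdots)_{(0)},Q^2(\cdots)_{(1)}$. In hexagon (1.2) the roles are reversed: the right-hand side is $(t_{_{U,W}}\o id_V)a^{-1}_{_{U,W,V}}(id_U\o t_{_{V,W}})$, so it is $W$ that passes through two braidings. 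Concretely, $t_{_{V,W}}$ places $Q^1(\cdots)$ on the $P$-component, and the second braiding $t_{_{U,W}}$ then coacts on that component, producing $Q^1(\cdots)_{(0)}\cdot p_{(0)(0)}$ and $Q^1(\cdots)_{(1)}\cdot p_{(0)(1)}$, while $m$ contributes only the single legs $m_{(0)},m_{(1)}$. Your sufficiency expansion has this backwards: you wrote $Q^1(p_{(1)},n_{(1)})_{(0)}\cdot m_{(0)(0)}$ and $\a^{-4}_H(Q^1(p_{(1)},n_{(1)})_{(1)})\cdot\a^{-4}_C(m_{(0)(1)})$, i.e.\ the iterated coaction on $M$. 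This does not match the shape of (\ref{E}), whose outer $q$ has first argument $\a^{-1}_H(Q^1(\cdots)_{(1)})\cdot\a^{-3}_C(e_1)$ (an $e$-leg, hence a $W$-leg) and second argument $c$ (a single $U$-leg), with the action $(Q^1(\cdots)_{(0)}\o 1)$ landing in the $W$-slot. So with your two displayed expansions the claimed ``termwise agreement under (\ref{E})'' simply does not occur, and sufficiency is not established as written.

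The necessity half has a related defect. Your evaluation of the right-hand side at $1\o1\o1\o c\o d\o e$ uses the Sweedler leg $e_1$ twice, once inside $q(\a^{-1}_H(Q^1(\cdots)_{(1)})\cdot\a^{-3}_C(e_1),\,c_2)$ and again as the residual last tensor factor to be killed by the counit; this is not well-formed Sweedler notation. Since $W$ is braided twice, $e$ must be comultiplied twice, so the correct expression carries three distinct legs of $e$ (the analogue of the legs $c_{11},c_{12},c_2$ in the paper's computation for (1.1)), with only the $e_{11}$-type leg surviving as residual. Note also that your necessity expansion (iterated legs on $e$) contradicts your own sufficiency expansion (iterated legs on $m$), a symptom of the same confusion about which object is double-braided. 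A smaller point: both sides of (1.2) have domain $U\o(V\o W)$, so the test element should be $m\o(n\o p)$, not $(m\o n)\o p$. Once you redo the right-hand expansion with the iterated coaction on $P$ and carry the corresponding double coproduct of $e$ through the test-object computation, the argument closes along the lines you set out, matching the lemma's stated condition.
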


\begin{proof}
The proof is similar to the proof of the above lemma, and left to the reader.
\end{proof}

We summarize our results as follows.

\begin{theorem}
Let $(H,A,C)$ be a monoidal Doi-Hopf datum, and $Q:C\o C\rightarrow A\o A$ a twisted convolution invertible map. Then the family of maps $t_{_{M,N}}:M\o N\rightarrow N\o M$ given by
\begin{eqnarray*}
&&t_{_{M,N}}(m\o n)=Q(\a^{-2}_C(n_{(1)}),\a^{-2}_C(m_{(1)}))\cdot(\a^{-2}_N(n_{(0)})\o \a^{-2}_M(m_{(0)})),
\end{eqnarray*}
defines a braiding on the category $_A\mathcal{M}(H)^C$ if and only if $Q$ satisfies the relations \ref{B}--\ref{E}.

\end{theorem}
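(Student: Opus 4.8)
The plan is to assemble the theorem directly from the four lemmas proved above, since each of them isolates exactly one of the defining properties a braiding must satisfy. Recall that by definition a braiding on a monoidal category is a family of \emph{natural isomorphisms} $t_{_{M,N}}$ obeying the two hexagon axioms (1.1) and (1.2). Thus there are four separate conditions to check: that each $t_{_{M,N}}$ is a morphism in $_A\mathcal{M}(H)^C$ (i.e. both $A$-linear and $C$-colinear), that the family is natural in $M$ and $N$, that it is invertible, and that the two hexagons hold. I would organize the proof so that each of these falls out of material already established.

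First I would observe that invertibility is immediate: since $Q$ is assumed twisted convolution invertible with inverse $R$, the remark following the definition of $t_{_{M,N}}$ already records that $t_{_{M,N}}$ is bijective, with inverse built from $R$ in the evident way. Next, for the morphism property, I would invoke Lemma 3.2 and Lemma 3.4 in tandem: Lemma 3.2 shows $t_{_{M,N}}$ is $A$-linear for all $M,N$ precisely when (\ref{B}) holds, and Lemma 3.4 shows $t_{_{M,N}}$ is $C$-colinear for all $M,N$ precisely when (\ref{D}) holds. Together these two give that each $t_{_{M,N}}$ is a morphism in $_A\mathcal{M}(H)^C$ exactly when both (\ref{B}) and (\ref{D}) are satisfied. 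Then I would appeal to Lemma 3.6 and Lemma 3.7, which state that the first hexagon (1.1) holds for all $M,N,P$ iff (\ref{C}) holds, and the second hexagon (1.2) holds for all $M,N,P$ iff (\ref{E}) holds. Stringing these four equivalences together yields that $t$ satisfies all braiding axioms if and only if $Q$ satisfies (\ref{B})--(\ref{E}), which is the claim.

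The one point that the four lemmas do not literally cover, and which I would need to address explicitly, is the \emph{naturality} of the family $t=\{t_{_{M,N}}\}$: a braiding requires that for every pair of morphisms $f:M\to M'$ and $g:N\to N'$ in the category one has $(g\o f)\circ t_{_{M,N}}=t_{_{M',N'}}\circ(f\o g)$. I expect this to be the part requiring a short independent verification. I would argue it directly from the formula for $t_{_{M,N}}$: since $f$ and $g$ are $A$-linear and $C$-colinear, they commute with $\a_M,\a_N$ and intertwine the coactions, so pushing $f\o g$ through the expression $Q(\a^{-2}_C(n_{(1)}),\a^{-2}_C(m_{(1)}))\cdot(\a^{-2}_N(n_{(0)})\o\a^{-2}_M(m_{(0)}))$ reproduces $t_{_{M',N'}}$ applied to $(f\o g)(m\o n)$. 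This uses only colinearity of $f,g$ (to match the $C$-components fed into $Q$) and $A$-linearity together with compatibility with $\a$ (to match the action of $Q(\cdots)$ on the $(0)$-components), so it is a routine but necessary diagram chase.

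The main obstacle, conceptually, is not any single computation but ensuring that the biconditional is assembled correctly in both directions. For the ``if'' direction one simply feeds the four identities into the four lemmas; but for the ``only if'' direction one must be careful that the braiding hypothesis forces \emph{all four} conditions simultaneously, and in particular that testing the axioms on the specific objects $A\o C$ (via Lemma 3.1, where $A\o C$ is shown to be a genuine Doi-Hopf module) is enough to recover the universal identities (\ref{B})--(\ref{E}). Each lemma already performs this specialization to $A\o C$ and applies the appropriate counit maps $\varepsilon$ to extract the stated identity, so the work is genuinely done; my role in the theorem is to state clearly that a braiding exists iff each of the four lemma hypotheses holds, and that these hypotheses are exactly (\ref{B})--(\ref{E}). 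I would therefore keep the proof brief, citing Lemmas 3.2, 3.4, 3.6 and 3.7 for the four equivalences, noting invertibility from convolution-invertibility of $Q$, and supplying the one-line naturality check as the only genuinely new ingredient.
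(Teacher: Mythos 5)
Your proposal is correct and follows the same route as the paper: Theorem 3.8 is stated there precisely as a summary of Lemmas 3.2, 3.4, 3.6 and 3.7, with bijectivity of $t_{_{M,N}}$ already noted from the convolution invertibility of $Q$. Your additional explicit naturality check (using $A$-linearity and $C$-colinearity of morphisms) is a routine point the paper leaves implicit, and it is argued correctly.
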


\begin{example}
(1) If $C=k$ and denote $Q=Q(1)=Q^1\o Q^2=q^1\o q^2\in A\o A$. Then $\a_A^{\o2}(Q)=Q(1)=Q$, i.e., $Q$ is $\a_A$-invariant.

Relations (\ref{C}) and (\ref{E}) can be rewritten as
\begin{eqnarray*}
&&(\Delta_A\o\a_A)Q=\a^2_A(Q^1)\o\a_A(q^1)\o q^2\a_A(Q^1)=\a_A(Q^1)\o\a_A(q^1)\o q^2Q^2,\\
&&(\a_A\o\Delta_A)Q=q^1\a_A(Q^1)\o\a_A(q^1)\o\a^2_A(Q^2)=q^1Q^1\o\a_A(q^2)\o\a_A(Q^2).
\end{eqnarray*}
We conclude that the relations of Theorem 3.8 are satisfied if and only if $(A,\a_A,Q^{-1})$ is quasitriangular.

(2) Hom-bialgebra $(H,\a_H)$ is called coquasitriangular if there exists a linear map $\sigma:H\o H\rightarrow k$ such that for all $a,b,c\in H$
\begin{eqnarray*}
&&\sigma(b_2,a_2)b_1a_1=\sigma(b_1,a_1)a_2b_2,\\
&&\sigma(ab,c)=\sigma(\a_H(a),c_1)\sigma(\a_H(b),c_2),\\
&&\sigma(c,ab)=\sigma(c_1,\a_H(a))\sigma(c_2,\a_H(b)).
\end{eqnarray*}

 If $A=k$, relations (\ref{C}) and (\ref{E}) take the form
\begin{eqnarray*}
&&Q(de,~\a_C(c))=Q(\a_C(e),~c_1) Q(\a_C(d),c_2),\\
&&Q(\a_C(e),cd)=Q(e_1,\a_C(c))Q(\a_C(e_2),d).
\end{eqnarray*}
We conclude that the relations of Theorem 3.8 are satisfied if and only if $(C,\a_C,Q)$ is coquasitriangular.

(3) Let $(H,\a)$ be a Hom-Hopf algebra with bijective antipode. We have verified that the category of Yetter-Drinfeld modules $_H\mathcal{YD}^H\simeq\ _H\mathcal{M}(H^{op}\o H)^H$ is a monoidal category. As we all know, $_H\mathcal{YD}^H$ is braided and the braiding is given by
$$t_{_{M,N}}:M\o N\rightarrow N\o M,~~m\o n\mapsto \a^{-1}_N(n_{(0)})\o\a^{-1}_M(\a^{-1}_H(n_{(1)})\cdot m).$$
The corresponding map $Q$ is
$$Q(h\o g)=\varepsilon(g)(1\o h).$$
It is straightforward to check that $Q$ satisfies the conditions in Theorem 3.8.
\end{example}

\section{The generalized smash product of Hom-bialgebra and Drinfeld double}
\def\theequation{4.\arabic{equation}}
\setcounter{equation} {0} \hskip\parindent

In this section, we will introduce a new algebra such that its module category is isomorphic to the category $_A\mathcal{M}(H)^C$. Moreover this new algebra is more general than Drinfeld double and is quasitriangular when $_A\mathcal{M}(H)^C$ is braided.

\begin{proposition}
Let $(H,\a_H)$ be a Hom-bialgebra, $(A,\a_A)$ a right $H$-comodule algebra and $(B,\a_B)$ a left $H$-module algebra. Then we have a Hom-algebra $(A\#B,\a_A\o\a_B)$ which is equal to $A\o B$ as a vector space, with the following multiplication
$$(a\#b)(a'\#b')=a\a^{-1}_A(a'_{(0)})\#(\a^{-1}_B(b)\leftharpoonup\a^{-2}_H(a'_{(1)}))b',$$
for all $a,a'\in A$ and $b,b'\in B$. The unit is $1_A\#1_B$.
\end{proposition}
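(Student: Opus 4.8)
The plan is to check the four defining axioms of a unital Hom-associative algebra for the triple $(A\#B,\,m,\,\alpha_A\otimes\alpha_B)$, where $m$ is the stated multiplication: that $\alpha_A\otimes\alpha_B$ is multiplicative and fixes the unit, the two one-sided unit conditions, and Hom-associativity. Throughout I write the left $H$-action on $B$ in the right-hand form $x\leftharpoonup h:=h\cdot x$, and I freely use the standing data: $\alpha_A,\alpha_B$ are algebra endomorphisms; $\rho$ is an algebra map satisfying $\rho\circ\alpha_A=(\alpha_A\otimes\alpha_H)\circ\rho$, $\rho(1_A)=1_A\otimes 1_H$, $(\mathrm{id}\otimes\varepsilon_H)\circ\rho=\alpha_A$ and the comodule coassociativity $(\rho\otimes\alpha_H)\circ\rho=(\alpha_A\otimes\Delta_H)\circ\rho$; and that $B$ is a left $H$-module algebra, so that $1_H\cdot x=\alpha_B(x)$, $\alpha_B(x\leftharpoonup h)=\alpha_B(x)\leftharpoonup\alpha_H(h)$, $1_B\leftharpoonup h=\varepsilon_H(h)1_B$, $(xy)\leftharpoonup h=(x\leftharpoonup h_1)(y\leftharpoonup h_2)$, and the Hom-module rule $(x\leftharpoonup h')\leftharpoonup\alpha_H(h)=\alpha_B(x)\leftharpoonup(hh')$.

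The multiplicativity and unit statements are short. For multiplicativity I would expand both $(\alpha_A\otimes\alpha_B)\big((a\#b)(a'\#b')\big)$ and $(\alpha_A\otimes\alpha_B)(a\#b)\cdot(\alpha_A\otimes\alpha_B)(a'\#b')$; since coacting on $\alpha_A(a')$ returns $\alpha_A(a'_{(0)})\otimes\alpha_H(a'_{(1)})$ and $\alpha_B$ intertwines the action up to applying $\alpha_H$ to the acting element, the $\alpha^{\pm}$-powers collapse and the two expressions agree. That $\alpha_A\otimes\alpha_B$ fixes $1_A\#1_B$ is immediate. For the left unit, $(1_A\#1_B)(a\#b)=1_A\alpha^{-1}_A(a_{(0)})\#(1_B\leftharpoonup\alpha^{-2}_H(a_{(1)}))b$; here $1_A\alpha^{-1}_A(a_{(0)})=a_{(0)}$, $1_B\leftharpoonup\alpha^{-2}_H(a_{(1)})=\varepsilon_H(a_{(1)})1_B$, and then $(\mathrm{id}\otimes\varepsilon_H)\rho=\alpha_A$ together with $1_Bb=\alpha_B(b)$ yield $\alpha_A(a)\#\alpha_B(b)$. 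The right unit is dual, using $\rho(1_A)=1_A\otimes 1_H$ and $1_H\cdot x=\alpha_B(x)$.

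The substance of the proof is Hom-associativity, i.e. the identity $(\alpha_A\otimes\alpha_B)(a\#b)\cdot\big[(a'\#b')(a''\#b'')\big]=\big[(a\#b)(a'\#b')\big]\cdot(\alpha_A\otimes\alpha_B)(a''\#b'')$. I would expand both sides by the definition. The crucial asymmetry is that on the right-associated side $a''$ is coacted only once (producing $a''_{(0)}$ and a single $a''_{(1)}$, the latter acting on $B$ through $\Delta_H$), whereas on the left-associated side $a''$ is coacted twice (producing the iterated legs $a''_{(0)(0)},a''_{(0)(1)},a''_{(1)}$). The comodule coassociativity $(\rho\otimes\alpha_H)\rho=(\alpha_A\otimes\Delta_H)\rho$ is exactly what converts this iterated coaction into a single coaction carrying a $\Delta_H$-split on the $H$-leg, so that the two sides acquire matching index patterns. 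Once the indices are aligned, the $A$-tensorand reduces to the Hom-associativity of $A$ (applying $\alpha_A(x)(yz)=(xy)\alpha_A(z)$ to $a$, $\alpha^{-1}_A(a'_{(0)})$ and a suitable power of the remaining $A$-leg), while the $B$-tensorand follows by splitting the action of $a''_{(1)}$ over the product inside $B$ via $(xy)\leftharpoonup h=(x\leftharpoonup h_1)(y\leftharpoonup h_2)$, merging the successive actions of the $a'_{(1)}$- and $a''_{(1)}$-legs via the Hom-module rule, and regrouping by Hom-associativity of $B$.

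The main obstacle is therefore not any single identity but the simultaneous bookkeeping: the negative $\alpha$-powers on the two tensor factors and the pairing of the coaction legs (on $A$) with the action legs (on $B$) must be tracked together, since the $H$-labels $a'_{(1)},a''_{(1)}$ surface in the $B$-slot while their companions $a'_{(0)},a''_{(0)}$ live in the $A$-slot, so the two factors cannot be verified independently. To keep this manageable I would, as in the lemmas of Section~3, apply the coassociativity relation once at the outset to normalize both sides to the same coaction pattern, and only then collapse the $\alpha$-powers and invoke the Hom-associativity of $A$ and of $B$; this isolates the one genuinely structural step and renders the remainder routine.
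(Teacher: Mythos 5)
Your overall strategy -- direct verification of the Hom-algebra axioms, with the comodule coassociativity $(\rho\otimes\alpha_H)\rho=(\alpha_A\otimes\Delta_H)\rho$ used to align the iterated coaction on the third factor before invoking Hom-associativity of $A$ and of $B$ -- is exactly what the paper's ``routine computation'' amounts to, and your unit and multiplicativity checks are fine. However, two of the standing identities you equip yourself with are wrong, and with them the central Hom-associativity check does not close. The module-algebra compatibility in the Hom setting is not the classical $(xy)\leftharpoonup h=(x\leftharpoonup h_1)(y\leftharpoonup h_2)$ that you state, but carries Yau's twist: $(xy)\leftharpoonup \alpha_H^2(h)=(x\leftharpoonup h_1)(y\leftharpoonup h_2)$. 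If you run your computation with the untwisted rule, then after normalizing both sides by coassociativity, the side $[(a\#b)(a'\#b')]\cdot(\alpha_A(a'')\#\alpha_B(b''))$ produces the factor $b\leftharpoonup[\alpha_H^{-2}(a'_{(1)})\,\alpha_H^{-1}(a''_{(1)1})]$ while the side $(\alpha_A(a)\#\alpha_B(b))\cdot[(a'\#b')(a''\#b'')]$ produces $b\leftharpoonup[\alpha_H^{-2}(a'_{(1)})\,\alpha_H^{-3}(a''_{(1)1})]$ (and similarly $\alpha_H^{-1}$ versus $\alpha_H^{-3}$ on $a''_{(1)2}$); this $\alpha_H^{2}$ discrepancy cannot be repaired by any bookkeeping. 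That the twisted axiom is the operative one can be read off the paper itself: the monoidal compatibility (2.1) carries an $\alpha_H^2$, and the intended example $B=C^*$, with action $\langle f\leftharpoonup h,c\rangle=\langle f,h\cdot\alpha_C^{-2}(c)\rangle$ and the convolution that makes $(C^*,(\alpha_C^{-1})^*)$ a Hom-algebra, satisfies precisely $(f*g)\leftharpoonup\alpha_H^2(h)=(f\leftharpoonup h_1)*(g\leftharpoonup h_2)$ and not the untwisted identity.

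Second, your reading of $\leftharpoonup$ as the left action written on the right, $x\leftharpoonup h:=h\cdot x$, is not tenable either. The composition rule it yields, $(x\leftharpoonup h')\leftharpoonup\alpha_H(h)=\alpha_B(x)\leftharpoonup(hh')$, multiplies in $H$ in the opposite order to that of a genuine right action, and in the associativity check this places $a'_{(1)}$ and $a''_{(1)1}$ in opposite orders on the two sides: one side ends with $[\alpha_H^{-3}(a''_{(1)1})\,\alpha_H^{-2}(a'_{(1)})]\cdot b$, the other with $[\alpha_H^{-2}(a'_{(1)})\,\alpha_H^{-3}(a''_{(1)1})]\cdot b$, and these agree only for commutative $H$. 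The word ``left'' in the statement is an inconsistency of the paper itself: $\leftharpoonup$ must be a genuine right Hom-module action, as the paper's construction of $C^*$ as a \emph{right} $H$-module algebra immediately after this proposition confirms. With those two corrections -- right action, and Yau's twisted module-algebra axiom -- your plan does go through: the $A$-legs match by Hom-associativity of $A$, and the $B$-legs match on the nose.
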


\begin{proof}
The proof is a routine computation.
\end{proof}

If $(C,\a_C)$ is a left $H$-module coalgebra, then $(C^*,(\a^{-1}_C)^*)$ is a right $H$-module algebra, and the right $H$-action is given by
$$\langle f\leftharpoonup h,c\rangle =\langle f,h\cdot\a^{-2}_C(c)\rangle ,$$
for all $f\in C^*,c\in C$ and $h\in H$.

\begin{lemma}
Let $(H,A,C)$ be a Hom-type Doi-Hopf datum. Then we have the category isomorphism $_A\mathcal{M}(H)^C\simeq\ _{A\#C^*}\mathcal{M}$.
\end{lemma}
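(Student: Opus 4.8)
The plan is to establish the isomorphism $_A\mathcal{M}(H)^C \simeq\ _{A\#C^*}\mathcal{M}$ by constructing mutually inverse functors on objects and checking they respect morphisms. First I would observe that the essential content is a linear-algebra dualization: a right $C$-comodule structure $\r':M\to M\o C$ on a finite-dimensional-ish object is equivalent to a left $C^*$-module structure via $f\cdot m = \langle f, \a_C(m_{(1)})\rangle\, m_{(0)}$ (up to the appropriate $\a$-twists forced by the Hom-setting), using the $H$-action on $C^*$ given just before the statement. So the forgetful-type assignment sends a Doi-Hopf module $(M,\a_M)$, which is a left $A$-module and right $C$-comodule, to the same underlying space $M$ made into a left $A\#C^*$-module. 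The converse sends an $A\#C^*$-module to a left $A$-module (restrict along $a\mapsto a\#\varepsilon_C$) and a right $C$-comodule (recover the coaction from the $C^*$-action). These are clearly inverse bijections on objects and identity on underlying morphisms, so the only real work is verifying that the axioms match.

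Second, I would pin down precisely the $A\#C^*$-action. Given $(M,\cdot,\r')$ in $_A\mathcal{M}(H)^C$, define
\begin{eqnarray*}
(a\# f)\rh m = \langle f,\ \a_C(m_{(0)(1)})\rangle\ \a^{-1}_A(a)\cdot m_{(0)(0)},
\end{eqnarray*}
choosing the twist exponents so that the Hom-module axioms $1_{A\#C^*}\rh m = \a_M(m)$, $\a_M((a\#f)\rh m)=(\a_A\o(\a_C^{-1})^*)(a\#f)\rh\a_M(m)$, and the Hom-associativity $\a(a\#f)\rh((a'\#f')\rh m) = ((a\#f)(a'\#f'))\rh\a_M(m)$ hold. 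The multiplication of $A\#C^*$ from Proposition 4.1, specialized to $B=C^*$ with the stated right $H$-action $\langle f\lh h,c\rangle=\langle f,h\cdot\a^{-2}_C(c)\rangle$, reads $(a\#f)(a'\#f')=a\a^{-1}_A(a'_{(0)})\#(\a^{-1}_{C^*}(f)\lh\a^{-2}_H(a'_{(1)}))f'$, and checking Hom-associativity of $\rh$ is exactly where the Doi-Hopf compatibility condition $\r(a\cdot m)=a_{(0)}\cdot m_{(0)}\o a_{(1)}\cdot m_{(1)}$ must be invoked to move the $A$-part past the $C^*$-part.

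Third, I would verify the reverse direction: starting from a left $A\#C^*$-module $(N,\rh)$, set $a\cdot n = (a\#\varepsilon_C)\rh n$ to get the $A$-module, and define the coaction $\r'(n)=\sum_i (e_i\#\varepsilon_C... )$-type formula using a dual basis $\{c_i\}\subset C$, $\{c_i^*\}\subset C^*$, namely $\r'(n)=\sum_i (1_A\# c_i^*)\rh n \o (\text{twist of }c_i)$, and then check the comodule axioms and the compatibility condition again turn into the module axioms for $A\#C^*$. Both directions are symmetric computations once the twists are fixed.

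The main obstacle will be bookkeeping the $\a$-twist exponents so that everything is simultaneously consistent: the comodule-to-module translation, the definition of the $H$-action on $C^*$ (which already carries an $\a^{-2}_C$), the smash-product multiplication (which carries $\a^{-1}_A,\a^{-2}_H$), and the Hom-module axioms all impose constraints on where powers of $\a$ sit, and getting a single choice that makes all identities hold—rather than merely plausible—is the delicate part. A secondary subtlety is that defining the coaction from the $C^*$-action implicitly requires $C$ to be finite-dimensional (or that the image of the coaction lies in $M\o C$ with $C$ replaced by a suitable finite piece); I would either assume finite-dimensionality of $C$, as is standard for such smash/double constructions, or argue via the rational part of $C^*$ to make the recovery of $\r'$ from the $C^*$-action well defined. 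Once the twists and this finiteness point are settled, functoriality and the verification that the two assignments are mutually inverse are routine.
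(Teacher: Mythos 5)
Your plan follows the paper's proof in structure: send a Doi-Hopf module to the same underlying space with $A\#C^*$ acting by pairing $f$ against the $C$-coaction, and conversely restrict along $a\mapsto a\#\varepsilon_C$ and recover the coaction from the $C^*$-action via a dual basis, with morphisms untouched. The problem is that the one concrete formula you commit to does not work, and everything else is deferred. As written, $(a\#f)\rh m=\langle f,\a_C(m_{(0)(1)})\rangle\,\a_A^{-1}(a)\cdot m_{(0)(0)}$ is not even well formed: the iterated coaction produces the three-fold tensor $m_{(0)(0)}\o m_{(0)(1)}\o m_{(1)}$, and your right-hand side never contracts the leg $m_{(1)}$. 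Reading it charitably as $\langle f,\a_C(m_{(1)})\rangle\,\a_A^{-1}(a)\cdot m_{(0)}$, it fails the Hom-unit axiom: $(1_A\#\varepsilon_C)\rh m=\varepsilon(m_{(1)})\,1_A\cdot m_{(0)}=1_A\cdot\a_M(m)=\a_M^2(m)\neq\a_M(m)$. The correct choice (the paper's) is $(a\#f)\cdot m=\langle f,m_{(1)}\rangle\,a\cdot\a_M^{-1}(m_{(0)})$, which gives $(1_A\#\varepsilon_C)\cdot m=\a_M(m)$, and whose Hom-associativity is then checked by a computation that invokes the Doi-Hopf compatibility $\r(a\cdot m)=a_{(0)}\cdot m_{(0)}\o a_{(1)}\cdot m_{(1)}$ together with the comodule axiom $(\r\o\a_C)\circ\r=(\a_M\o\Delta)\circ\r$; a symmetric computation with the dual basis establishes the converse. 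Since you yourself identify the twist bookkeeping as ``the delicate part'' and then leave it unresolved --- and your one provisional resolution is off --- the proposal is an outline of the paper's argument rather than a proof: in the Hom setting, fixing these exponents and pushing them through the associativity, coassociativity and compatibility verifications is essentially all the content of this lemma.

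One point in your favor, which the paper itself glosses over: you are right that recovering the coaction $\r(m)=\sum_i(1\#e^i)\cdot m\o e_i$ from the $C^*$-action requires a dual basis $\{e_i\},\{e^i\}$ of $C$, hence $C$ finite dimensional (or a restriction to rational $A\#C^*$-modules). The paper's converse construction uses exactly such a dual basis without ever stating this hypothesis, so the lemma as stated is imprecise; your explicit flag of this issue, and the suggestion to pass to the rational part of $C^*$ in the infinite-dimensional case, is a genuine improvement over the paper's treatment.
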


\begin{proof}
For any object $(M,\a_M)$ in $_A\mathcal{M}(H)^C$, define the left action of $A\#C^*$ on $M$ by
$$(a\#f)\cdot m=\langle f,m_{(1)}\rangle a\cdot\a^{-1}_M(m_{(0)}),$$
for all $a\in A,f\in C^*$ and $m\in M$. Indeed for all $b\in A$ and $g\in C^*$,
$$\begin{aligned}
(a\#f)(b\#g)\cdot \a_M(m)&=(a\a^{-1}_A(b_{(0)})\#(\a^*(f)\leftharpoonup\a^{-2}_H(b_{(1)}))g)\cdot\a_M(m)\\
                         &=\langle (\a^*(f)\leftharpoonup\a^{-2}_H(b_{(1)}))g,\a_M(m_{(1)})\rangle a\a^{-1}_A(b_{(0)})\cdot m_{(0)}\\
                         &=\langle f,\a^{-1}_H(b_{(1)})\cdot\a^{-2}_M(m_{(1)1})\rangle \langle g,\a^{-1}_M(m_{(1)2})\rangle a\a^{-1}_A(b_{(0)})\cdot m_{(0)}\\
                         &=\langle g,m_{(1)}\rangle \langle f,\a^{-1}_H(b_{(1)})\cdot\a^{-2}_C(m_{(0)(1)})\rangle\\
                         &~~~~~ \a_A(a)\cdot\a^{-1}_M(b_{(0)}\cdot\a^{-1}_M(m_{(0)(0)}))\\
                         &=\langle g,m_{(1)}\rangle (\a_A(a)\#(\a^{-1})^*(f))\cdot(b\cdot\a^{-1}(m_{(0)}))\\
                         &=(\a_A(a)\#(\a^{-1})^*(f))\cdot((b\#g)\cdot m),
\end{aligned}$$
and
$(1\#\varepsilon_C)\cdot m=\a_M(m)$.

Conversely suppose that $M\in\ _{A\#C^*}\mathcal{M}$. Easy to see that $M$ is a left $A$-module by
$$a\cdot m=(a\#\varepsilon_C)\cdot m,$$
for all $a\in A$ and $m\in M$.

Also $M$ is a right $C$-comodule by
$$m_{(0)}\o m_{(1)}=\sum (1\#e^i)\cdot m\o e_i,$$
where $\{e_i\}$ and $\{e^i\}$ is dual basis of $C$.

In fact for all $f,g\in C^*$,
$$\begin{aligned}
\a_M((1\#e^i)\cdot m)\langle f, e_{i1}\rangle\langle g, e_{i2}\rangle&=(1\#(\a^{-1}_C)^*(e^i))\cdot\a_M(m)\langle f*g, \a^2_C(e_i)\rangle\\
&=(1\#(\a_C)^*(f*g))\cdot\a_M(m)\\
&=(1\#f)\cdot((1\#(\a_C)^*(e^i))\cdot m)\\
&=(1\#e^j)\cdot((1\#e^i)\cdot m)\langle f, e_{j}\rangle\langle g,\a_C(e_i)\rangle,
\end{aligned}$$
Therefore $\sum_i\a_M((1\#e^i)\cdot m)\o e_{i1}\o e_{i2}=\sum_{i,j}(1\#e^j)\cdot((1\#e^i)\cdot m)\o e_{j}\o\a_C(e_i).$
And
$$\sum_i (1\#e^i)\cdot m \varepsilon_C(e_i)=(1\#\varepsilon_C)\cdot m=\a_M(m).$$
For all $a\in A$, we have
$$\begin{aligned}
a_{(0)}\cdot m_{(0)}\o a_{(1)}\cdot m_{(1)}&=\sum_i a_{(0)}\cdot((1\#e^i)\cdot m)\o a_{(1)}\cdot e_i\\
&=\sum_i ((\a^{-1}_A(a_{(0)})\#\varepsilon_C)(1\#e^i))\cdot \a_M(m)\o a_{(1)}\cdot e_i\\
&=\sum_i (a_{(0)}\#(\a^{-1})^*(e^i))\cdot \a_M(m)\o a_{(1)}\cdot e_i\\
&=\sum_i (1\#(\a^{-1})^*(e^i)(a\#\varepsilon_C)\cdot \a_M(m)\o e_i\\
&=\sum_i (1\#e^i)\cdot(a\cdot m)\o e_i\\
&=(a\cdot m)_{(0)}\o(a\cdot m)_{(1)}.
\end{aligned}$$

The proof is completed.

\end{proof}

\begin{proposition}
Let $(H,\a_H)$ be a Hom-bialgebra, $(A,\a_A)$ a right $H$-comodule algebra and $(B,\a_B)$ a left $H$-module algebra. Assume also that $A$ and $B$ are Hom-bialgebra. Then $(A\#B,\a_A\o\a_B)$ is a Hom-bialgebra with the tensor coproduct if and only if
\begin{eqnarray}
&&\Delta_A(a_{(0)})\o b_1\leftharpoonup\a^{-1}_H(a_{(1)1})\o b_2\leftharpoonup\a^{-1}_H(a_{(1)2})\nonumber\\
~~~~~~&&=a_{1(0)}\o a_{2(0)}\o(b_1\leftharpoonup \a^{-1}_H(a_{1(1)}))\o(b_2\leftharpoonup \a^{-1}_H(a_{2(1)})),\\
&&\varepsilon_A(a_{(0)})\varepsilon_B(b\leftharpoonup\a^{-1}_H(a_{(1)}))=\varepsilon_A(a)\varepsilon_B(b).
\end{eqnarray}

Moreover if both of $A$ and $B$ are Hom-Hopf algebras, then $A\#B$ is also a Hom-Hopf algebra with the antipode
$$S(a\#b)=(1\#S_B\a^{-1}_B(b))(S_A\a^{-1}_A(a)\#1),$$
for all $a\in A$ and $b\in B$.
\end{proposition}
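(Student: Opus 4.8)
The plan is to treat the two assertions in turn. By Proposition 4.1 the pair $(A\#B,\a_A\o\a_B)$ is already a Hom-associative algebra with unit $1_A\#1_B$, and the tensor coproduct $\Delta_{A\#B}(a\#b)=(a_1\#b_1)\o(a_2\#b_2)$ with counit $\varepsilon_{A\#B}=\varepsilon_A\o\varepsilon_B$ is simply the tensor-product Hom-coalgebra of $(A,\a_A)$ and $(B,\a_B)$, so Hom-coassociativity, the counit axiom, and the relations with $\a_A\o\a_B$ hold automatically. Since $\Delta_{A\#B}(1_A\#1_B)=(1_A\#1_B)\o(1_A\#1_B)$ and $\varepsilon_{A\#B}(1_A\#1_B)=1$ are immediate, the whole bialgebra question reduces to the single requirement that $\Delta_{A\#B}$ and $\varepsilon_{A\#B}$ be multiplicative.

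For multiplicativity of $\Delta_{A\#B}$ I would expand both $\Delta_{A\#B}\big((a\#b)(a'\#b')\big)$ and $\Delta_{A\#B}(a\#b)\,\Delta_{A\#B}(a'\#b')$ using the smash multiplication of Proposition 4.1 together with the facts that $\Delta_A,\Delta_B$ are algebra maps (since $A,B$ are Hom-bialgebras), that $\r$ is an algebra map, and that $\leftharpoonup$ is the module action. Comparing the two results slot by slot, the $A$-legs differ only by whether one first applies the coaction $\r$ to $a'$ and then $\Delta_A$, or first $\Delta_A$ and then $\r$, while the $B$-legs differ by whether $\Delta_B$ is applied to $\a^{-1}_B(b)\leftharpoonup\a^{-2}_H(a'_{(1)})$ or the action of the two halves of $a'_{(1)}$ is distributed over $b_1,b_2$. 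After cancelling the spectator factors $a_1,a_2,b'_1,b'_2$ (for instance by specialising $a=1_A$, $b'=1_B$), the equality of the two sides for all arguments is precisely the first displayed (comultiplication) identity of the statement. Likewise, applying $\varepsilon_{A\#B}$ to $(a\#b)(a'\#b')$ and simplifying with the multiplicativity of $\varepsilon_A,\varepsilon_B$ and $\varepsilon_A\circ\a_A=\varepsilon_A$ turns the equation $\varepsilon_{A\#B}\big((a\#b)(a'\#b')\big)=\varepsilon_{A\#B}(a\#b)\,\varepsilon_{A\#B}(a'\#b')$ into exactly the second (counit) identity. As each manipulation is reversible, this establishes the ``if and only if'' in both directions at once.

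For the Hopf assertion I would first note the easy relation $S\circ(\a_A\o\a_B)=(\a_A\o\a_B)\circ S$, immediate from $S_A\a_A=\a_AS_A$, $S_B\a_B=\a_BS_B$ and the multiplicativity of $\a_A\o\a_B$. The real content is the convolution identity. Writing $\Delta_{A\#B}(a\#b)=(a_1\#b_1)\o(a_2\#b_2)$, I would compute
$$S(a_1\#b_1)\,(a_2\#b_2)=\big[(1\#S_B\a^{-1}_B(b_1))(S_A\a^{-1}_A(a_1)\#1)\big](a_2\#b_2)$$
by iterating the smash multiplication, then commuting $S_A$ through the coaction $\r$ by means of the comodule-algebra antipode relation expressing $\r(S_A(a))$ in terms of $S_A$ and the antipode of $H$, and moving $S_B$ past the action $\leftharpoonup$. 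Invoking the first displayed identity to re-associate the coactions and comultiplications lets the $H$-actions telescope, after which the antipode axioms $S_A(a_1)a_2=\varepsilon_A(a)1_A$ and $S_B(b_1)b_2=\varepsilon_B(b)1_B$ collapse everything to $\varepsilon_A(a)\varepsilon_B(b)(1_A\#1_B)$; the mirror computation yields $(a_1\#b_1)\,S(a_2\#b_2)=\varepsilon_A(a)\varepsilon_B(b)(1_A\#1_B)$.

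I expect the main obstacle to be exactly this last verification rather than the bialgebra equivalence. Because the smash product twists multiplication simultaneously through the coaction on $A$ and the action on $B$, one cannot antipode the two tensor factors independently: the order reversal built into $S(a\#b)=(1\#S_B\a^{-1}_B(b))(S_A\a^{-1}_A(a)\#1)$ is precisely what makes the cross terms cancel, and exhibiting that cancellation demands careful tracking of the $\a$-powers, the correct form of $\r\circ S_A$, and a clean use of the first compatibility identity. I would therefore isolate the auxiliary lemma on $\r\circ S_A$ (equivalently on how $S_A$, $S_H$ and $\leftharpoonup$ interact) as a separate step, so that the concluding telescoping reduces to a routine, if lengthy, application of the antipode and Hom-associativity axioms.
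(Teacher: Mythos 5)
Your strategy for the bialgebra half (reduce to multiplicativity of $\Delta_{A\#B}$ and $\varepsilon_{A\#B}$, specialise $a=1_A$, $b'=1_B$, then recover the general case by multiplying back) is the natural one, and there is nothing in the paper to compare it with, since the paper's proof reads ``straightforward and left to the reader.'' But the identity your specialisation actually produces is not condition (4.1). Carried out, multiplicativity of the tensor coproduct is equivalent to
\[
a_{(0)1}\o a_{(0)2}\o \Delta_B\big(b\leftharpoonup\a^{-1}_H(a_{(1)})\big)
= a_{1(0)}\o a_{2(0)}\o\big(b_1\leftharpoonup\a^{-1}_H(a_{1(1)})\big)\o\big(b_2\leftharpoonup\a^{-1}_H(a_{2(1)})\big),
\]
whose left side involves $\Delta_B$ \emph{of the acted element}, whereas both sides of (4.1) carry the distributed form $b_1\leftharpoonup\cdots\,\o\,b_2\leftharpoonup\cdots$. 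You even noticed this asymmetry (``the $B$-legs differ by whether $\Delta_B$ is applied \dots or the action \dots is distributed over $b_1,b_2$'') and then asserted the outcome ``is precisely'' (4.1); that assertion is the gap, and it cannot be repaired, because the two conditions coincide only when $\Delta_B(b\leftharpoonup h)=b_1\leftharpoonup h_1\o b_2\leftharpoonup h_2$, a module-coalgebra hypothesis on $B$ that is nowhere assumed. In fact the equivalence as printed is false: take $\a=\mathrm{id}$ (the classical case), $H=k[g,g^{-1}]$ the group algebra of $\mathbb{Z}$, $A=H$ with $\r=\Delta_H$, and $B=k[x,y]$ with $x,y$ primitive and $g$ acting as the algebra automorphism $x\mapsto x+y^{2}$, $y\mapsto y$. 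Conditions (4.1) and (4.2) hold (all coactions and comultiplications of basis elements of $A$ are by group-likes, and the automorphism preserves $\varepsilon_B$), yet $\Delta\big((1\#x)(g\#1)\big)-\Delta(1\#x)\Delta(g\#1)=2\,(g\#y)\o(g\#y)\neq0$ when $\mathrm{char}\,k\neq2$, so $A\#B$ is not a Hom-bialgebra. A correct argument can only establish the displayed condition above in place of (4.1) (and that displayed condition is what Proposition 4.4 actually verifies for $B=C^*$, using condition (2.1) of a monoidal Doi--Hopf datum to produce the needed exchange); your write-up should flag this discrepancy rather than identify the two conditions.

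For the Hopf half, the tool you single out as the crux, an auxiliary lemma expressing $\r(S_A(a))$ through $S_A$ and the antipode of $H$, does not exist under the stated hypotheses: $\r$ is only a morphism of Hom-algebras, and no compatibility between $\r$ and any antipode is assumed or holds in general, so a proof routed through such a lemma stalls. It is also unnecessary. For $S\big((a\#b)_1\big)(a\#b)_2=\varepsilon_A(a)\varepsilon_B(b)\,1\#1$, after expanding the two smash multiplications the $A$-legs and $H$-legs assemble into the two legs of the single element $\r\big(S_A\a^{-1}_A(a_1)\,\a^{-1}_A(a_2)\big)=\varepsilon_A(a)\,1_A\o 1_H$; the point is to multiply \emph{inside} $\r$ (legitimate since $\r$ is multiplicative) before one ever needs $\r(S_A(a_1))$ by itself, after which the $H$-leg is $1_H$, acts trivially, and $S_B(b_1)b_2=\varepsilon_B(b)1_B$ finishes. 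For $(a\#b)_1\,S\big((a\#b)_2\big)$ the mechanism is dual: Hom-coassociativity of $\r$ puts both $H$-legs on the $B$-side, the module-algebra axiom contracts $(b_1\leftharpoonup h_1)(S_B(b_2)\leftharpoonup h_2)=(b_1S_B(b_2))\leftharpoonup h=\varepsilon_B(b)\varepsilon_H(h)1_B$, and the counit axiom of the comodule reduces the remaining $A$-leg to $a_1S_A(a_2)$. Neither computation uses (4.1). So your target identities for the antipode are the right ones, but the planned route goes through a lemma that is unavailable; the cancellation-inside-$\r$ and module-algebra contraction above are what actually make the verification work.
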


\begin{proof}
The proof is straightforward and left to the reader.
\end{proof}

\begin{proposition}
Let $(H,A,C)$ be a monoidal Doi-Hopf datum. Then $(A\#C^*,\a_A\o(\a^{-1}_C)^*)$ is a Hom-bialgebra and consequently $_A\mathcal{M}(H)^C$ and $_{A\#C^*}\mathcal{M}$ are isomorphic as monoidal category.
\end{proposition}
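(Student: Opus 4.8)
The plan is to prove the two assertions in sequence, since the monoidal-isomorphism conclusion follows formally from the bialgebra claim together with Lemma 4.2. First I would establish that $(A\#C^*,\a_A\o(\a^{-1}_C)^*)$ is a Hom-bialgebra by verifying the two compatibility conditions of Proposition 4.3, specialized to the case $B=C^*$. Here $A$ is a Hom-bialgebra and right $H$-comodule algebra, and $C^*$ is a Hom-bialgebra (since $C$ is, by the monoidal datum hypothesis) and a right $H$-module algebra via $\<f\lh h,c\>=\<f,h\c\a^{-2}_C(c)\>$ as noted after Proposition 4.1. Thus the main content is to translate the two displayed identities of Proposition 4.3 into statements about $C^*$ and show they are forced by the monoidal-datum axioms \eqref{A} and its counit companion.

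The key step is the dualization. I would evaluate the first identity of Proposition 4.3 against arbitrary test elements $c,d\in C$ in the $C^*$-tensor factors and against $1_C$ or general elements where needed, using the definition of the coproduct on $C^*$ (dual to $m_C$) and the $H$-action $\lh$ (dual to the $H$-action on $C$). Concretely, pairing the $B_1,B_2$ slots with $c,d$ turns $b_1\lh\a^{-1}_H(a_{(1)1})\o b_2\lh\a^{-1}_H(a_{(1)2})$ into an expression governed by $\a^{-1}_H(a_{(1)})\c$ acting on the product $cd$ in $C$, while the right-hand side produces $(a_{1(1)}\c c)(a_{2(1)}\c d)$-type terms. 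After bookkeeping of the $\a$-twists, the required equality becomes exactly the compatibility \eqref{A} of Definition 2.1 read backwards through the pairing; the counit condition of Proposition 4.3 similarly reduces to the second monoidal-datum axiom $\v(a)1_C=\v(a_{(0)})a_{(1)}\c1_C$. I would present this as: ``evaluating on $c\o d$ and using \eqref{A} gives \dots,'' without grinding every $\a$-power.

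Once $A\#C^*$ is a Hom-bialgebra, the monoidal isomorphism is immediate: Lemma 4.2 already gives a category isomorphism $_A\mathcal{M}(H)^C\simeq\ _{A\#C^*}\mathcal{M}$, and a Hom-bialgebra structure on $A\#C^*$ makes $_{A\#C^*}\mathcal{M}$ monoidal via the tensor coproduct. It then remains to check that the functor of Lemma 4.2 carries the monoidal structure of Proposition 2.3 on $_A\mathcal{M}(H)^C$ to the tensor-product module structure on $_{A\#C^*}\mathcal{M}$; this amounts to comparing the $(A\#C^*)$-action $(a\#f)\c(m\o n)$ computed via $\Delta_{A\#C^*}$ with the action obtained by transporting the formulas $a\c(m\o n)=a_1\c m\o a_2\c n$ and $\r(m\o n)=m_{(0)}\o n_{(0)}\o\a^{-2}_C(m_{(1)}n_{(1)})$ through the action/coaction dictionary of Lemma 4.2.

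The main obstacle I anticipate is the dualization bookkeeping in the first step: matching the placement of $\a_H^{\pm k}$ twists when dualizing the $H$-action on $C$ to the $H$-action on $C^*$, since the pairing $\<f\lh h,c\>=\<f,h\c\a^{-2}_C(c)\>$ and the coproduct $(\a^{-1}_C)^*$ introduce shifts that must line up precisely with the shifts already present in \eqref{A}. The algebra is purely mechanical but error-prone; once the correct test-element evaluation is set up, each of the two Proposition 4.3 conditions collapses to one monoidal-datum axiom, and the monoidal compatibility of the functor is then a direct comparison of the two coproduct-induced actions. I would relegate these verifications to ``a straightforward computation,'' in keeping with the style of Proposition 4.3.
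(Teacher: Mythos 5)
Your proposal matches the paper's proof: the paper likewise verifies the two conditions of Proposition 4.3 for $B=C^*$ by pairing the $C^*$-factors against test elements $c,d\in C$, unfolding the action $\lh$ and the dual coproduct, and collapsing the result to axiom \eqref{A} and the counit axiom of the monoidal Doi-Hopf datum, after which the monoidal isomorphism follows from Lemma 4.2. Your extra remark about checking that the functor of Lemma 4.2 transports the tensor structure of Proposition 2.3 to the $\Delta_{A\#C^*}$-induced one is a point the paper leaves implicit, so your outline is, if anything, slightly more complete.
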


\begin{proof}
For all $a\in A$, $c,d\in C$ and $f\in C^*$,
$$\begin{aligned}
&\Delta_A(a_{(0)})\langle f_1\leftharpoonup\a^{-1}_H(a_{(1)1}),c\rangle\langle f_2\leftharpoonup\a^{-1}_H(a_{(1)2}),d\rangle\\
&=\Delta_A(a_{(0)})\langle f_1,\a^{-1}_H(a_{(1)1})\cdot\a^{-2}_C(c)\rangle\langle f_2,\a^{-1}_H(a_{(1)2})\cdot\a^{-2}_C(d)\rangle\\
&\stackrel{(\ref{A})}=a_{(0)1}\o a_{(0)2}\langle f,\a^{-1}_H(a_{(1)})\cdot\a^{-4}_C(cd)\rangle\\
&=a_{(0)1}\o a_{(0)2}\langle f\leftharpoonup\a^{-1}_H(a_{(1)}),\a^{-2}_C(cd)\rangle\\
&=a_{(0)1}\o a_{(0)2}\langle f_1\leftharpoonup\a^{-1}_H(a_{(1)1}),c\rangle\langle f_2\leftharpoonup\a^{-1}_H(a_{(1)2}),d\rangle,
\end{aligned}$$
and
$$
\varepsilon_A(a_{(0)})\langle f,\a^{-1}_H(a_{(1)1})\cdot 1_C\rangle=\varepsilon_A(a)\langle f,1_C\rangle.
$$
Thus $(4.1)$ and $(4.2)$ are satisfied.

The proof is completed.

\end{proof}

\begin{example}
Consider the monoidal Doi-Hopf datum $(H,H,H^{op}\o H)$ defined in Proposition 2.5. The algebra $A\#C^*$ is nothing else than the Drinfeld double $(H\bowtie H^*,\a_H\o(\a^{-1})^*)$. Thus we recover the multiplication, comultiplication and antipode by
\begin{eqnarray*}
&&(a\bowtie f)(b\bowtie g)=a\a^{-2}_H(b_{12})\bowtie[S^{-1}\a^{-3}_H(b_{11})\rightharpoonup((\a^*_H)^2)(f)\leftharpoonup\a^{-3}_H(b_2)]g,\\
&&\Delta(h\o f)=h_{1}\bowtie f_{1}\o h_{2}\bowtie f_{2},\\
&&S(a\bowtie f)=(1\#S^*_H\a^*_H(f))(S_H\a^{-1}_H(a)\#\varepsilon_H),
\end{eqnarray*}
for all $a,b\in H$, $f,g\in H^*$.
\end{example}

Now let $(H,A,C)$ be a monoidal Doi-Hopf datum and assume that we have a twisted convolution inverse map $Q:C\o C\rightarrow A\o A$ satisfying (\ref{B})--(\ref{E}). $Q$ induces a map
$$\widetilde{Q}:k\longrightarrow (A\#C^*)\o(A\#C^*),$$
for all $c,d\in C$ and $f,g\in A^*$.

The braiding on $_A\mathcal{M}(H)^C$ translates into a braiding on $_{A\#C^*}\mathcal{M}$. This means that $A\#C^*$ is quasitriangular and the corresponding $R\in(A\#C^*)\o(A\#C^*)$ is just $\widetilde{Q}(1)$.

In particular for a finite dimensional Hom-Hopf algebra $(H,\a)$, the Drinfeld double $D(H)$ is quasitriangular.

\section*{ACKNOWLEDGEMENTS}

This work was supported by the NSF of China (No. 11371088) and the NSF of Jiangsu Province (No. BK2012736).

 Daowei Lu, Department of Mathematics, Southeast University, Nanjing, Jiangsu Province, 210096, P. R. China, e-mail: ludaowei620@126.com.

 Shuanhong Wang, Department of Mathematics, Southeast University, Nanjing, Jiangsu Province, 210096, P. R. China.


\begin{thebibliography}{aa}

\bibitem{AGS} L. Alvarez-Gaum¨¦, C. Gomez and G. Sierra, Quantum group interpretation of some conformal field theories. Phys. Lett. B {\bf 220}(1989), 142--152.


\bibitem{AS} N. Aizawa, H. Sato, $q$-deformation of the Virasoro algebra with central extension. Phys. Lett. B {\bf 256} (1991), 185--190.


\bibitem{CMZ} S. Caenepeel, G. Militaru, S. Zhu, Crossed modules and Doi-Hopf modules. Israel J. Math. {\bf 100}(1997), 221--247.


\bibitem{CVZ} S. Caenepeel, F. Van Oystaeyen, B. Zhou, Making the category of Doi-Hopf
modules into a braided monoidal category. Algebras Representation Theory {\bf 1}(1998), 75--96.


\bibitem{D} Y. Doi, Unifying Hopf modules. J. Algebra {\bf 153} (1992), 373--385.


\bibitem{HLS} J. T. Hartwig, D. Larsson and S. D. Silvestrov, Deformations of Lie algebras using $\sigma$-derivations. J. Algebra {\bf 295} (2006), 314--361.


\bibitem{K} C. Kassel, Quantum Group. Grad. Texts in Math., vol. {\bf 155}, Springer Berlin, 1995.


\bibitem{M} S. Majid, Foundations of quantum group theory. Cambridge university press, 1995.


\bibitem{MP} A. Makhlouf, F. Panaite, Hom-L-R-smash products, Hom-diagonal crossed products and the Drinfel'd double of a Hom-Hopf algebra. J. Algebra {\bf 441}(2015), 314--343.


\bibitem{MS1} A. Makhlouf, S. D. Silvestrov, Hom-algebra structure. J. Gen. Lie Theory Appl. {\bf 2}(2008), 52--64.


\bibitem{MS2} A. Makhlouf, S. D. Silvestrov, Hom-algebras and Hom-coalgebras. J. Alg. Appl. {\bf 9}(2010), 553--589.


\bibitem{RT} D. E. Radford, J. Towber, Yetter-Drinfeld categories associated to an arbitrary
bialgebra. J. Pure Appl. Algebra {\bf 87}(1993), 259--279.


\bibitem{S} M. E. Sweedler, Hopf Algebras. New York: Benjamin, 1969.


\bibitem{Y} D. Yau, Hom-quantum groups: I. Quasi-triangular Hom-bialgebras. J. Phys. A: Math. Theor. {\bf 45}(2012), 065203.

\end{thebibliography}
\end{document}